\documentclass[11pt,letterpaper,reqno]{amsart}
\usepackage{tikz}
\usetikzlibrary{positioning, shapes.geometric, arrows.meta, calc}
\usepackage{amssymb}
\usepackage{amsmath}
\usepackage{amsthm}
\usepackage{amsfonts}
\usepackage{bbm}
\usepackage{enumitem} 
\usepackage{pgfplots}
\pgfplotsset{compat=1.18} 
\usepackage{booktabs}
\usepackage{graphicx}
\usepackage[T1]{fontenc}
\usepackage{doi}
\usepackage{float} 
\usepackage{comment} 
\usepackage{bookmark}
\usepackage{hyperref}
\hypersetup{pdfstartview={FitH}}

\newcommand{\ZZ}{\mathbb{Z}}

\newtheorem{theorem}{Theorem}[section]
\newtheorem{lemma}[theorem]{Lemma}
\newtheorem{proposition}[theorem]{Proposition}
\newtheorem{corollary}[theorem]{Corollary}

\newtheorem{problem}[theorem]{Problem}
\newtheorem{conjecture}[theorem]{Conjecture}
\theoremstyle{definition}

\numberwithin{equation}{section}

\newcommand{\seqnum}[1]{\href{https://oeis.org/#1}{\rm \underline{#1}}}

\begin{document}

\title[Hofstadter consecutive-sum sequence]{The Hofstadter consecutive-sum sequence omits infinitely many positive integers}

\author{Quanyu Tang}
\address{School of Mathematics and Statistics, Xi'an Jiaotong University, Xi'an 710049, P. R. China}
\email{tang\_quanyu@163.com}

\subjclass[2020]{Primary 11B83; Secondary 11D61}
\keywords{self-generating sequence, consecutive-term sums}

\begin{abstract}
Let $(a_n)_{n\ge 1}$ be the greedy self-generating sequence defined by $a_1=1$, $a_2=2$, and, for $k\ge 3$, by taking $a_k$ to be the least integer greater than $a_{k-1}$ that can be written as a sum of at least two consecutive earlier terms. Hofstadter asked about the asymptotic behavior of this sequence. In this paper we prove that
\[
n+\Omega(\log\log n)\le a_n \ll n^{4175/2506+o(1)}.
\]
In particular, $(a_n)_{n\ge1}$ omits infinitely many positive integers, thereby settling a conjecture from the OEIS entry \seqnum{A005243}.
\end{abstract}

\maketitle

\section{Introduction}

A self-generating greedy process, often discussed under the name \emph{Hofstadter sequence} (see, for instance, \cite{Weisstein-HofstadterSequences} and \cite[E31]{Gu04}), starts from a short initial list and repeatedly adjoins sums of consecutive earlier terms. In the classical case one begins with $(1,2)$ and defines a strictly increasing sequence $(a_n)_{n\ge1}$ by
\[
a_1=1,\qquad a_2=2,
\]
and, for $k\ge3$, taking $a_k$ to be the least integer greater than $a_{k-1}$ that can be written as a sum of at least two consecutive earlier terms:
\begin{equation}\label{eq:hof}
a_k=\sum_{i=p}^{q} a_i
\qquad\text{for some }1\le p\le q\le k-1,\ \ q-p\ge1.
\end{equation}
This is sequence \seqnum{A005243} in the OEIS~\cite{OEIS-A005243}. The asymptotic behavior of $(a_n)_{n\ge1}$ was asked about by Hofstadter and has been recorded in several places, including \cite[p.~71]{Er77c}, \cite[p.~83]{ErGr80}, \cite[E31]{Gu04}, and as Problem~\#423 on Bloom's \emph{Erd\H{o}s Problems} website~\cite{EP}.

\begin{problem}[{\cite{Er77c,ErGr80,Gu04}}]\label{prob:423}
What is the asymptotic behavior of the sequence \eqref{eq:hof}?
\end{problem}

Numerically, the sequence appears to contain ``most'' integers. For orientation, the sequence \eqref{eq:hof} begins
\[
1,2,3,5,6,8,10,11,14,16,17,18,19,21,22,24,25,29,30,32,\dots,
\]
and hence it already omits $4,7,9,12,13,15,20,\dots$. At the same time, the \emph{COMMENTS} section of \seqnum{A005243} records the following conjecture.

\begin{conjecture}[{\cite{OEIS-A005243}}]\label{conj:missing}
The sequence \eqref{eq:hof} omits infinitely many positive integers.
\end{conjecture}

For later use, write
\[
b_n:=a_n-n\qquad(n\ge1).
\]
Our first result settles Conjecture~\ref{conj:missing}.

\begin{theorem}\label{thm:qualitative}
The sequence \((b_n)_{n\ge1}\) is nondecreasing and unbounded. Consequently,
\[
a_n=n+\omega(1),
\]
and the set \(\{a_n:n\ge1\}\) omits infinitely many positive integers.
\end{theorem}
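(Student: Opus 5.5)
We argue by contradiction after a trivial first step. \emph{Monotonicity.} Since $(a_n)$ is a strictly increasing sequence of integers, $a_{n+1}\ge a_n+1$, hence $b_{n+1}=a_{n+1}-(n+1)\ge a_n-n=b_n$. So $(b_n)$ is nondecreasing. If it is unbounded, then $a_n-n\to\infty$. Moreover $[1,a_n]$ contains exactly $n$ terms of the sequence, so it omits $b_n$ integers, and these counts tend to infinity. This gives all the consequences stated in Theorem~\ref{thm:qualitative}. It therefore remains to prove unboundedness.

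\emph{Setting up the contradiction.} Suppose $(b_n)$ is bounded. A nondecreasing bounded integer sequence is eventually constant, so there exist $N$ and $c$ with $a_n=n+c$ for all $n\ge N$. Then every integer $m\ge N+c$ is a term. We pick $m=2^j$ with $j$ odd and large, say $m=a_k$ with $k>N$. By definition, $2^j=\sum_{i=p}^{q}a_i$ with $q-p\ge1$ and $q\le k-1$. We show that no such representation exists once $j$ is large, splitting into three cases according to the position of $p$ and $q$ relative to $N$.

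\emph{Case $q<N$.} The sum is at most $\sum_{i<N}a_i$, a constant. So this case is impossible for large $j$.

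\emph{Case $p\ge N$.} The summands $p+c,\dots,q+c$ are at least two consecutive positive integers. A sum of $r\ge2$ consecutive positive integers always has an odd divisor $>1$: it equals $r(2s+r-1)/2$, and one of $r$ and $2s+r-1$ is odd and greater than $1$. Hence it is never a power of $2$, a contradiction.

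\emph{Case $p<N\le q$ (the main step).} Put $C_p=\sum_{i=p}^{N-1}a_i$; there are only finitely many such constants. Write $y=q+c$ and $L=(N+c-1)(N+c)$. Then
\[
2^j=C_p+\frac{y(y+1)-L}{2}.
\]
Multiplying by $8$ and completing the square gives
\[
(2y+1)^2=2^{j+3}+K_p,\qquad K_p:=4L+1-8C_p .
\]
The key observation is that $K_p$ is odd, hence nonzero. Since $j$ is odd, $j+3$ is even; set $X=2^{(j+3)/2}$. Then
\[
(2y+1-X)(2y+1+X)=K_p\neq0 .
\]
Because $2y+1>0$, the second factor is positive, so the first factor is a nonzero integer and $|2y+1-X|\ge1$. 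Therefore $X\le 2y+1+X\le |K_p|$. This bounds $j$ in terms of $\max_p|K_p|$, so this case is also impossible for large $j$.

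Taking $j$ odd and larger than all of these bounds, $2^j$ is a term that has no admissible representation, contradicting the definition of the sequence. Hence $(b_n)$ is unbounded.

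The main obstacle is the mixed case $p<N\le q$, where the representation uses both early irregular terms and the consecutive tail. It is resolved by reducing it to the Diophantine equation $x^2=2^{j+3}+K$ with $K$ fixed. Restricting to odd $j$ makes this a difference of two squares, and the parity observation that $K_p$ is odd guarantees $K_p\neq0$, so there are only finitely many solutions.
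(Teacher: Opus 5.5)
Your argument is correct. Its skeleton matches the paper's: monotonicity, eventual linearity $a_n=n+c$, taking a large power of two as a term, and the three-way split on where the representing block sits. The difference is in the decisive mixed case $p<N\le q$.

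\emph{The paper's route.} It keeps every large exponent and reduces to $v^2+v+E_C=2^{r+1}$. It then invokes the Schinzel--Tijdeman theorem (Lemma~\ref{lem:st}, Corollary~\ref{cor:quad}) and pigeonholes over the finitely many $E_C$. This is a deep, Baker-type input.

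\emph{Your route.} You take $j$ odd, so $2^{j+3}$ is a perfect square. Then $(2y+1)^2-2^{j+3}=K_p$, with $K_p$ odd and hence nonzero, factors as a difference of squares and gives the explicit bound $2^{(j+3)/2}\le\max_p|K_p|$. A single power of two with no admissible representation already gives the contradiction, so discarding even exponents costs nothing. Your proof is therefore completely elementary and explicit. The extra strength of the paper's route, ruling out all large exponents at once, is not needed for Theorem~\ref{thm:qualitative}.

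\emph{A bonus.} Your device also transfers to Section~\ref{sec:quant-lower}. In Lemma~\ref{lem:local-power-plateau} one has $2^{r+3}=x^2+D$ with $D\neq0$ and $|D|\le 13\widehat{T}^2$, so for odd $r$ the same factorization gives $2^{r}<\tfrac{169}{8}\widehat{T}^4$. The interval $(\tfrac{169}{8}\widehat{T}^4,\tfrac{169}{2}\widehat{T}^4]$ contains $2^r$ for some odd $r$. Hence this replaces Beukers' bound, and turns the recursion $M(\widehat{B})\ll M(\widehat{B}-1)^{20}$ into $M(\widehat{B})\ll M(\widehat{B}-1)^{4}$. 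That improves the constant $1/\log 20$ in Theorem~\ref{thm:quantitative-lower} to $1/\log 4$.
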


In fact, building on Theorem~\ref{thm:qualitative}, we prove the following quantitative lower bound.

\begin{theorem}\label{thm:quantitative-lower}
We have
\[
a_n\ge n+\frac{\log\log n}{\log 20}-O(1).
\]
\end{theorem}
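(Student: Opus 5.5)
The plan is to show that $b$ cannot stay constant for long. Concretely, if $b_n=m$ for all $n$ in a stretch $N\le n\le M$, I aim to prove $M\le CN^{20}$. Since $b$ is nondecreasing and unbounded (Theorem~\ref{thm:qualitative}), let $N_m$ be the first index with $b_{N_m}\ge m$. The bound then gives $N_{m+1}\le CN_m^{20}$. Iterating, $\log\log N_m\le m\log 20+O(1)$, which after inversion is exactly $b_n\ge \log\log n/\log 20-O(1)$. So everything reduces to a ``stretch lemma''.

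On such a stretch we have $a_n=n+m$, so the terms are consecutive integers, and every integer in $(a_N,a_M]$ is attained. The candidates for an omitted integer are powers of two: a sum of at least two consecutive integers is never a power of $2$. Hence if $2^j\in(a_N,a_M]$ is attained, its representation $2^j=\sum_{i=p}^{q}a_i$ must use an early index $p<N$. Since $a_{N-1}<2^j$, and a sum lying entirely before index $N$ only produces values we can bound (by about $N a_N$), it must also have $q\ge N$ once $2^j$ exceeds that bound. Writing $T_p=\sum_{i=p}^{N-1}a_i$, which lies in $[0,O(N^2+Nm)]$, the representation becomes
\[
2^j=T_p+\frac{(q-N+1)(q+N+2m)}{2}.
\]
After multiplying by $8$ and completing the square, this reads $X^2-D_p=2^{j+3}$, where $X=2q+2m+1$ is an integer and $|D_p|\ll (N+m)^2$ depends only on $p$. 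Since $m=b_N\le N$ trivially, $|D_p|\ll N^2$.

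The key step is to look only at even exponents $j+3=2t$. Then $(X-2^t)(X+2^t)=D_p$. If $D_p\neq0$, this forces $2^t\le |D_p|\ll N^2$. So for $2^{2t}\gg N^{4}$ the power $2^{2t-3}$ can only be represented via some $p$ with $D_p=0$, and then $X=2^t$ exactly. But $X=2q+2m+1$ is odd, so $X=2^t$ is impossible for $t\ge1$. Hence no even power $2^{2t-3}$ with $2^{2t}\gg N^4$ and $2^{2t-3}\le a_M$ is a term of the sequence. On the other hand every integer in the stretch is a term, so $a_M<2^{2t-3}$ for the first admissible $t$, which gives $M\ll N^{c}$ for some absolute $c$.

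The main obstacle, which is where I expect the constant $20$ to come from, is bookkeeping the ranges precisely. Three things need care. First, one must bound the values of purely early sums $\sum_{i=p}^{q}a_i$ with $q<N$; these can be as large as $\sum_{i<N}a_i\approx N^2/2$, and the power of two must exceed them. Second, one must ensure that the chosen power of two actually lies inside $(a_N,a_M]$, so that it would have to be a term if the stretch were that long. Third, one must control $D_p$ uniformly, including its dependence on $m$ via the trivial bound $m\le N$. I would first run the argument with crude estimates ($|D_p|\le N^{4}$, say) to get some polynomial exponent. I would then tighten each estimate to see whether it lands at $20$ or better. Any polynomial exponent $c$ already proves a bound of the form $\log\log n/\log c$.
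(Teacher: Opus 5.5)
Your route is correct in substance but genuinely different from the paper's, and once one slip is repaired it proves more than the stated theorem.

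\textbf{Comparison with the paper.} The paper uses every power $2^r$ on a plateau. It bounds solutions of $x^2+D=2^{r+3}$ by Beukers' explicit theorem, $r+3<435+10\log|D|/\log 2$. With $|D|\le 13\widehat T^2$, this is exactly where the exponent $20$ comes from, not from range bookkeeping. You instead keep only exponents with $j+3=2t$ even, so that $2^{j+3}$ is a square. Then $(X-2^t)(X+2^t)=D_p$ with $X>0$, and $D_p$ is odd (hence nonzero), which forces $2^t<|D_p|$. This is completely elementary. It costs only a factor $4$ in locating a suitable power of two, and it gives $a_M\ll a_N^4$ on a stretch. Hence you get $b_n\ge \log\log n/\log 4-O(1)$, which is stronger than the theorem. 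So your final paragraph misdiagnoses the situation: careful bookkeeping lands at $4$, not $20$. The same trick also makes the Schinzel--Tijdeman input to Theorem~\ref{thm:qualitative} unnecessary. Your stretch lemma uses only the monotonicity of $b$, and by itself rules out an eventually constant $b_n$, because $a_M$ would stay bounded.

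\textbf{The incorrect step.} You claim ``$m=b_N\le N$ trivially.'' That is the statement $a_N\le 2N$, which is not established anywhere. The paper proves only $a_n\ll n^{4175/2506+\varepsilon}$ and leaves even $a_n=n+o(n)$ open. Moreover $b$ can jump by more than $1$ (e.g.\ $b_8=3$, $b_9=5$), so nothing a priori ties $m$ to $N_m$. As written, your bound $|D_p|\ll N^2$ and the recursion $N_{m+1}\le CN_m^{c}$ are therefore unjustified.

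\textbf{The repair.} You do not need $m\le N$. Keep $T:=N+m=a_N$ as the parameter, as the paper does with $\widehat T$.
\begin{itemize}
\item Since $b_i\le m$ for $i<N$, you get $\sum_{i<N}a_i\le T^2$ and $|D_p|\le 1+4T(T-1)+8T_p\le 13T^2$.
\item Take $t$ minimal with $2^t\ge 13T^2$. Then $2^{2t-3}$ exceeds both the early sums and $a_N$, and $2^{2t-3}<85\,T^4$.
\item Your argument then gives $a_M<85\,T^4$, i.e.\ $M+m\ll (N+m)^4$.
\item Iterate on $T_m:=N_m+m$, the analogue of the paper's $M(\widehat B)$. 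This gives $T_{m+1}\le 87\,T_m^4$. The case $b_{N_m}>m$, where the stretch is empty and $T_{m+1}=T_m+1$, is included.
\item Finally, $n<N_{b_n+1}\le T_{b_n+1}$ yields $\log\log n\le (b_n+1)\log 4+O(1)$.
\end{itemize}
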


We also obtain a quantitative upper bound.

\begin{theorem}\label{thm:upper-final}
For every $\varepsilon>0$ there exists $C_\varepsilon>0$ such that, for all $n\ge1$,
\[
a_n\le C_\varepsilon\,n^{4175/2506+\varepsilon}.
\]
\end{theorem}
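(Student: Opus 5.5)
The plan is to turn Theorem~\ref{thm:upper-final} into a statement about gaps and prove that by strong induction. Put $\theta:=4175/2506+\varepsilon$ and $\alpha:=1-1/\theta$, so that $\alpha\approx 1669/4175\approx 0.4$. It suffices to prove that for every $k$,
\[
a_k-a_{k-1}\ll_\varepsilon a_{k-1}^{\alpha}.
\]
Indeed, $x_k=a_k$ then satisfies $x_k-x_{k-1}\ll x_{k-1}^{1-1/\theta}$. Comparing with the differential inequality $dx/dk\ll x^{1-1/\theta}$ gives $x_k^{1/\theta}\ll k$, i.e.\ $a_n\ll n^{\theta}$.

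By greediness, $a_k$ is the least consecutive-block sum $S(p,q)=\sum_{i=p}^q a_i$ (with $q-p\ge1$, $q\le k-1$) exceeding $a_{k-1}$. So the gap bound reduces to the following: every interval $(x,x+Cx^{\alpha}]$ with $x=a_{k-1}$ contains some block sum of terms with indices $<k$.

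The induction hypothesis supplies two kinds of information about the earlier terms:
\begin{itemize}
\item an upper bound $a_i\ll i^{\theta}$ for $i<k$;
\item from Theorem~\ref{thm:qualitative}, $a_i\ge i$ with $b_i$ nondecreasing.
\end{itemize}
Hence $a_{i+1}-a_i\ge1$ always, while the inductive gap bound gives $a_{i+1}-a_i\ll a_i^{\alpha}$.

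I would use the two-parameter family $S(p,q)$ with $p$ and $q$ in a window around an index $m$ chosen so that $(q-p+1)a_m\approx x$. Two moves change the sum:
\begin{itemize}
\item shifting the block, $(p,q)\mapsto(p+1,q+1)$, changes $S$ by $a_{q+1}-a_p$, roughly the length $L=q-p+1$ times an average gap;
\item extending it, $q\mapsto q+1$, changes $S$ by $a_{q+1}$.
\end{itemize}
Combining the two moves gives a set of sums that is roughly a generalized arithmetic progression with two "steps" of very different sizes. The plan is to pick $L$ and $m$ so that the shifting increments $a_{q+1}-a_p$ are small, of order $x^{\alpha}$. Then one shows that as the block slides across a range of length $\asymp x^{\alpha}$, the sums $S(p,q)$ cannot all jump over the target interval.

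The naive version gives a weaker exponent. Sliding only shows that consecutive shifted sums differ by $a_{q+1}-a_p$, and the inductive bound controls this only as $L$ times a gap, which is too large. To gain, I would exploit cancellation: several different lengths $L$ give several interleaved progressions. The claim to prove is that the union of their value sets is $x^{\alpha}$-dense near $x$, unless the $a_i$ have an arithmetic structure that is incompatible with $b_i$ being nondecreasing and slowly growing.

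The main obstacle is exactly this density step. What has to be controlled is the distribution, in short intervals, of values of a "discrete quadratic" $S(p,q)\approx \tfrac12(q^2-p^2)+(\text{error from }b_i)$, and this looks like a lattice-point-near-a-curve problem. I would first try to write the problem in terms of $q^2-p^2=(q-p)(q+p)$ and treat the error term $\sum_{i=p}^q b_i$ as a perturbation controlled by the induction hypothesis. Then I would use exponent-pair / van der Corput type estimates to count solutions in a short interval.

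The specific exponent $4175/2506$ suggests the optimum comes from balancing two error terms: the inductive loss from the $b_i$ perturbation against the savings from the short-interval estimate. The final step is to optimize these parameters, and closing the induction forces $\alpha=1669/4175$ up to $\varepsilon$. I expect the exponent-sum estimate with the rough weights $b_i$ to be where the real difficulty and the $\varepsilon$-loss lie.
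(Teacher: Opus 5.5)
There is a genuine gap. Everything rests on the ``density step'': the claim that every interval $(x,x+Cx^{\alpha}]$ with $x=a_{k-1}$ contains a block sum. You do not prove it, and the route you sketch cannot work as described.

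Van der Corput or exponent-pair estimates need a smooth phase. Here that would mean $S(p,q)$ is a small perturbation of $\tfrac12(q^2-p^2)$. Your induction hypothesis gives only $a_i\ll i^{\theta}$ with $\theta>1$, so $b_i$ may be much larger than $i$, and $\sum_{i=p}^{q}b_i$ can swamp the quadratic term. Even $b_n=o(n)$ is posed as an open problem at the end of the paper. Apart from $b_i$ being a nondecreasing integer sequence, you have no regularity of $(a_i)$ at all, so an exponential-sum estimate has nothing to act on.

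Likewise, $\alpha=1669/4175$ is simply $1-1/\theta$ read off from the target, not the output of any balancing. A worst-case gap bound is also a short-interval statement strictly stronger than the theorem: it implies $a_n\ll n^{\theta}$, but not conversely. You have no tool that delivers it.

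The missing idea is that no short-interval information is needed, only a global count. Since $(a_3,a_4,\dots)$ is the increasing enumeration of the set $\mathcal R$ of all block sums of length at least $2$ (Lemma~\ref{lem:greedy-enumerates-R}), showing $\#(\mathcal R\cap[1,X])\ge n$ already gives $a_{n+2}\le X$.

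The prefix sums $s_0<s_1<\dots<s_m$ form a convex set because the $a_i$ are strictly increasing. Every positive difference $s_j-s_i$ with $j-i\ge 2$ lies in $\mathcal R\cap[1,s_m]$. Bloom's bound $|A-A|\gg_\eta |A|^{6681/4175-\eta}$ for convex $A$ (Theorem~\ref{thm:Bloom-convex-diff}) then gives $|\mathcal R_m|\gg m^{\delta}$ with $\delta=6681/4175-\eta$.

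Taking $m\asymp n^{1/\delta}$ gives $a_{n+2}\le s_m\le m\,a_m$, that is, the recursion $a_n\le C_1 n^{1/\delta}a_{\lceil C_1 n^{1/\delta}\rceil}$. Iterating it yields $a_n\ll n^{1/(\delta-1)}(\log n)^{O(1)}$. The exponent $4175/2506=1/(6681/4175-1)$ comes directly from Bloom's convex-set exponent, not from any lattice-point balancing.
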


The proof of Theorem~\ref{thm:upper-final} combines the greedy structure of the sequence with a recent lower bound for difference sets of finite convex sets. Thus, besides settling Conjecture~\ref{conj:missing}, we also obtain a first polynomial upper bound toward Problem~\ref{prob:423}. For related work on consecutive sums in integer sequences, see Beker~\cite{Beker2024}, Hegyv\'ari~\cite{Hegyvari1986}, and Tao~\cite{Tao2026}.

\subsection*{Notation}
We use Vinogradov's asymptotic notation. For functions $f=f(n)$ and $g=g(n)$, we write $f=O(g)$, $g=\Omega(f)$, $f\ll g$, or $g\gg f$ to mean that there exists a constant $C>0$ such that $|f(n)|\le Cg(n)$ for all sufficiently large $n$. If the implicit constant is allowed to depend on one or more parameters $z_1,\dots,z_r$, we indicate this by writing $f=O_{z_1,\dots,z_r}(g)$, $g=\Omega_{z_1,\dots,z_r}(f)$, $f\ll_{z_1,\dots,z_r} g$ or $g\gg_{z_1,\dots,z_r} f$. We write $f\asymp g$ or $f=\Theta(g)$ to mean that $f\ll g$ and $g\ll f$, and write $f=o(g)$ to mean that $f(n)/g(n)\to0$ as $n\to\infty$. We write $f=\omega(g)$ to mean that $g=o(f)$; in particular, $\omega(1)$ denotes any quantity tending to infinity as $n\to\infty$.

\subsection*{Paper organization}
Section~\ref{sec:preliminaries} collects the two preliminary ingredients used later: the characterization of sums of consecutive positive integers and a Diophantine finiteness consequence of the theorem of Schinzel and Tijdeman. Section~\ref{sec:qualitative} proves Theorem~\ref{thm:qualitative}. Section~\ref{sec:quant-lower} establishes the quantitative lower bound in Theorem~\ref{thm:quantitative-lower}. Section~\ref{sec:upper-bound} proves the polynomial upper bound in Theorem~\ref{thm:upper-final}. Finally, Section~\ref{sec:conclude} contains concluding remarks on extensions, later improvements, and open questions.

\section{Preliminaries}\label{sec:preliminaries}

We start with a standard characterization of sums of consecutive integers.

\begin{lemma}\label{lem:consecutive}
A positive integer $N$ can be written as a sum of at least two consecutive positive integers
if and only if $N$ is not a power of $2$.
\end{lemma}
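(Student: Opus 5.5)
We prove both directions by writing a sum of consecutive integers in closed form and comparing parities of the two factors. A sum of $k\ge 2$ consecutive positive integers starting at $m\ge 1$ equals
\[
N=\sum_{i=0}^{k-1}(m+i)=\frac{k(2m+k-1)}{2},\qquad\text{i.e.}\qquad 2N=k\,(2m+k-1).
\]
The two factors $k$ and $2m+k-1$ have opposite parity, since their difference $2m-1$ is odd. Both are at least $2$, because $k\ge2$ and $2m+k-1\ge k+1\ge 3$. Moreover $k<2m+k-1$.

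\emph{Necessity.} Suppose $N=2^r$ were such a sum. Then $2^{r+1}$ would be a product of two integers, each at least $2$, with one of them odd. That odd factor would be an odd divisor of $2^{r+1}$ exceeding $1$, which is impossible. So a power of $2$ is never such a sum.

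\emph{Sufficiency.} Suppose $N$ is not a power of $2$, and write $N=2^r d$ with $d\ge3$ odd. We need integers $k\ge2$ and $m\ge1$ with $2N=k(2m+k-1)$. The idea is to split $2N=2^{r+1}\cdot d$ into the two factors $2^{r+1}$ and $d$, which have opposite parity. Set
\[
k=\min(2^{r+1},d),\qquad \ell=\max(2^{r+1},d).
\]
The two values differ because one is even and one is odd, so $k<\ell$. Define $m=(\ell-k+1)/2$. This is an integer because $\ell-k$ is odd, and it satisfies $m\ge1$ because $\ell>k$. By construction $\ell=2m+k-1$, so $2N=k\ell=k(2m+k-1)$. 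Finally, $k\ge2$ holds because $2^{r+1}\ge2$ and $d\ge3$.

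Hence $N=m+(m+1)+\dots+(m+k-1)$ is a sum of $k\ge2$ consecutive positive integers.

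The only point requiring care is choosing the smaller factor as $k$. This choice is what guarantees the starting term $m$ is positive rather than merely an integer. With that choice made, the rest is routine algebra.
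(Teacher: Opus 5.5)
Your proof is correct and follows essentially the same route as the paper. Necessity uses the identity $2N=k(2m+k-1)$ and the fact that one factor is odd and at least $3$. For sufficiency, your choice $k=\min(2^{r+1},d)$ reproduces the paper's two-case construction (either $d$ terms centred at $N/d$, or $2N/d$ terms), with $d$ taken to be the full odd part of $N$ rather than an arbitrary odd divisor $d>1$.
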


\begin{proof}
Suppose $N=x+(x+1)+\cdots+(x+\ell-1)$ with $\ell\ge2$ and $x\ge1$. Then
\[
2N=\ell(2x+\ell-1).
\]
If $\ell$ is odd, then $\ell\mid 2N$ implies $\ell\mid N$, so $N$ has an odd divisor $>1$.
If $\ell$ is even, then $2x+\ell-1$ is odd and divides $2N$, hence divides $N$; again $N$ has
an odd divisor $>1$.  In either case $N$ cannot be a power of $2$.

Conversely, if $N$ is not a power of $2$, then $N$ has an odd divisor $d>1$, say $N=dm$.
If $m\ge (d+1)/2$, then
\[
N=\Bigl(m-\frac{d-1}{2}\Bigr)+\Bigl(m-\frac{d-1}{2}+1\Bigr)+\cdots+\Bigl(m+\frac{d-1}{2}\Bigr)
\]
is a sum of $d$ consecutive positive integers.  If $m<(d+1)/2$, then
\[
N=\Bigl(\frac{d+1}{2}-m\Bigr)+\Bigl(\frac{d+1}{2}-m+1\Bigr)+\cdots+\Bigl(\frac{d+1}{2}+m-1\Bigr)
\]
is a sum of $2m\ge2$ consecutive positive integers.
\end{proof}

Next we record the Diophantine ingredient. We shall use the following consequence of a result of Schinzel and Tijdeman~\cite[Corollary~1]{SchinzelTijdeman1976}.

\begin{lemma}[\cite{SchinzelTijdeman1976}]\label{lem:st}
Let $P(x)\in\mathbb{Q}[x]$ be a polynomial with at least two simple zeros. Then the equation
\[
y^k=P(x),\qquad x,y\in\ZZ,\ \ |y|>1,
\]
has only finitely many integer solutions $(x,y,k)$ with $k>2$.
\end{lemma}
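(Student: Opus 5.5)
The plan is to reduce the statement to two classical finiteness theorems: first bound the exponent $k$, then, for each of the finitely many remaining exponents, bound the number of solutions $(x,y)$. Before doing so, clear denominators. Choose a positive integer $D$ with $DP(x)\in\ZZ[x]$. This rescaling changes neither the zeros nor their multiplicities. Throughout, an integer solution $(x,y,k)$ with $|y|>1$ and $k>2$ is what we are counting.

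\textbf{Step 1 (bounding the exponent).} Here I would invoke the main theorem of Schinzel and Tijdeman~\cite{SchinzelTijdeman1976}. It says that if $P\in\mathbb{Q}[x]$ has at least two distinct zeros, then any solution of $y^k=P(x)$ in integers with $y\ne0,\pm1$ satisfies $k\le c(P)$, where $c(P)$ is an effectively computable constant. Since $P$ has at least two simple zeros, it certainly has two distinct zeros, so the theorem applies. This step therefore only needs the hypothesis in its weaker form, and it leaves finitely many exponents $3\le k\le c(P)$.

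\textbf{Step 2 (fixed exponent).} Fix $k\ge3$. Factor $P(x)=a\prod_i(x-\alpha_i)^{r_i}$ over $\overline{\mathbb{Q}}$ with distinct $\alpha_i$, and set $m_i=k/\gcd(k,r_i)$. I would apply LeVeque's refinement of Siegel's theorem on superelliptic equations. It states that $y^k=P(x)$ has infinitely many integer solutions only if the tuple $(m_i)$ is, up to order, either $(t,1,1,\dots,1)$ or $(2,2,1,\dots,1)$.

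For the two simple zeros we have $r_i=1$, so $m_i=k\ge3$ for both. This rules out both exceptional patterns:
\begin{itemize}
\item the first pattern allows only one entry different from $1$, but we have two;
\item the second pattern would force $k=2$, contradicting $k\ge3$.
\end{itemize}
Hence each such equation has only finitely many solutions $(x,y)$. Taking the union over the finitely many $k$ from Step 1 gives finiteness of the set of triples $(x,y,k)$.

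\textbf{Expected obstacle.} The only real difficulty is bookkeeping. I need to check that the precise hypotheses of the cited results match: Schinzel--Tijdeman requires at least two distinct roots and $|y|>1$, and LeVeque's criterion is phrased via the multiplicities. The role of the \emph{simple} zeros is exactly to make two of the $m_i$ equal to $k$, so that LeVeque's exceptional cases are excluded for every $k\ge3$. Alternatively, this is essentially Corollary~1 of~\cite{SchinzelTijdeman1976}, which could simply be quoted.
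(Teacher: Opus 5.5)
Your proposal is correct and is essentially the standard derivation of Corollary~1 in \cite{SchinzelTijdeman1976}, which the paper simply quotes without proof: the main Schinzel--Tijdeman theorem bounds $k$, and a fixed-exponent superelliptic finiteness result (LeVeque's criterion, or Baker's effective version) then uses the two simple zeros to dispose of each remaining $k\ge3$. The only loose end is your denominator-clearing remark. It is unnecessary, since both cited results allow rational coefficients, and replacing $P$ by $DP$ does not keep the equation in the form $y^k=P(x)$; if you want integer coefficients, pass to $(Dy)^k=D^kP(x)$ for each fixed $k$ instead.
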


\begin{corollary}\label{cor:quad}
Fix $E\in\ZZ$.  The equation
\[
v^2+v+E=2^k
\]
has only finitely many integer solutions $(v,k)\in\ZZ^2$ with $k>2$.
\end{corollary}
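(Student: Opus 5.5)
We apply Lemma~\ref{lem:st} directly, with $y=2$ or $y=-2$ and exponent $k$. Set $P(x)=x^2+x+E\in\mathbb{Q}[x]$. Any solution $(v,k)$ with $k>2$ of $v^2+v+E=2^k$ gives an integer solution $(x,y,k)=(v,2,k)$ of $y^k=P(x)$ with $|y|=2>1$ and $k>2$. So it suffices to check that $P$ has at least two simple zeros. The lemma then says there are only finitely many triples $(x,y,k)$, hence only finitely many pairs $(v,k)$.

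The key step is verifying this hypothesis. The discriminant of $P$ is $1-4E$, which is odd for every integer $E$, so it is never $0$. Hence $P$ has two distinct complex roots $\tfrac{-1\pm\sqrt{1-4E}}{2}$, and both are simple because $\deg P=2$.

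The only point to check is that Lemma~\ref{lem:st} counts zeros over $\mathbb{C}$ rather than over $\mathbb{Q}$. This is the case in the Schinzel--Tijdeman setting: the condition only rules out $P$ being essentially a perfect power times a polynomial with at most one simple root. So no further case analysis is needed. There is no real obstacle: the argument reduces to observing that $1-4E$ is odd and therefore nonzero.
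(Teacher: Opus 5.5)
Your proof is correct and is essentially the paper's argument: apply Lemma~\ref{lem:st} with $y=2$ and $P(v)=v^2+v+E$, whose discriminant $1-4E$ is odd and hence nonzero, so $P$ has two distinct, and therefore simple, zeros. Your additional remarks are sound: the map $(v,k)\mapsto(v,2,k)$ is injective, and zeros are counted over $\mathbb{C}$. The mention of $y=-2$ is unnecessary.
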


\begin{proof}
Apply Lemma~\ref{lem:st} with $y=2$ and $P(v)=v^2+v+E$. Since $E\in\ZZ$, the discriminant of $P$ is $1-4E\neq0$; hence $P$ has two distinct (and therefore simple) zeros, so the lemma applies.
\end{proof}

\section{A qualitative lower bound and infinitely many omissions}\label{sec:qualitative}

Throughout this section we write
\[
b_n:=a_n-n\qquad(n\ge1).
\]
Our goal is to prove Theorem~\ref{thm:qualitative}. The first step is a simple monotonicity observation.

\subsection{Monotonicity and eventual linearity}

\begin{lemma}\label{lem:monotone}
The sequence $(b_n)_{n\ge1}$ is nondecreasing.
\end{lemma}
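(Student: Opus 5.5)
Nondecreasing $b_n$ is equivalent to $a_{n+1}-a_n\ge 1$, i.e. $a_{n+1}\ge a_n+1$. This is immediate from the definition: for $k\ge3$, $a_k$ is chosen as an integer strictly greater than $a_{k-1}$, so $a_k\ge a_{k-1}+1$. Also $a_2=2=a_1+1$. Hence
\[
b_{n+1}-b_n=(a_{n+1}-a_n)-1\ge 0
\]
for every $n\ge1$.

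The plan is therefore a two-line argument. First, record that $(a_n)$ is a strictly increasing sequence of integers. This holds by definition for $k\ge3$ and by direct check for $k=2$. Second, translate strict increase of integers into the inequality $a_{n+1}-a_n\ge1$, and subtract $1$ to get $b_{n+1}\ge b_n$.

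The one point needing care is that the greedy step is always well defined, i.e. some integer greater than $a_{k-1}$ is a sum of at least two consecutive earlier terms. For example, $a_{k-2}+a_{k-1}>a_{k-1}$ qualifies when $k\ge3$. This guarantees $a_k$ exists, and the monotonicity then follows with no real obstacle.
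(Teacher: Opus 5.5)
Your proposal is correct and matches the paper's proof: both use $a_{n+1}\ge a_n+1$ (strict increase of an integer sequence) to get $b_{n+1}-b_n=a_{n+1}-a_n-1\ge 0$. Your extra checks, that $a_2=a_1+1$ and that the greedy step is well defined via $a_{k-2}+a_{k-1}$, are valid details the paper leaves implicit.
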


\begin{proof}
For every $n\ge1$, the greedy construction gives $a_{n+1}>a_n$, hence $a_{n+1}\ge a_n+1$. Therefore
\[
b_{n+1}-b_n=(a_{n+1}-(n+1))-(a_n-n)=a_{n+1}-a_n-1\ge0.
\qedhere\]
\end{proof}

\begin{lemma}\label{lem:linear}
If $(b_n)_{n\ge1}$ is bounded above, then there exist integers $n_0\ge2$ and $B$ such that
\[
a_n=n+B\qquad(n\ge n_0).
\]
\end{lemma}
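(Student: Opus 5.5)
By Lemma~\ref{lem:monotone}, $(b_n)_{n\ge1}$ is a nondecreasing sequence of integers. If it is also bounded above, it takes only finitely many values. A nondecreasing integer sequence with finitely many values is eventually constant. So there exist $n_0$ and an integer $B$ with $b_n=B$ for all $n\ge n_0$, that is, $a_n=n+B$ for $n\ge n_0$. Replacing $n_0$ by $\max(n_0,2)$ if necessary gives $n_0\ge2$.

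Concretely, I would set $B:=\sup_n b_n$, which is finite by hypothesis and is an integer because each $b_n$ is. Being the supremum of a set of integers bounded above, $B$ is attained: $b_{n_1}=B$ for some $n_1$. Monotonicity then gives $B=b_{n_1}\le b_n\le B$ for all $n\ge n_1$. So $b_n=B$ for all $n\ge n_1$, and I would take $n_0:=\max(n_1,2)$.

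There is no real obstacle here. The only point needing care is that the supremum is attained, and this holds because the values are integers. The content of the lemma is simply that bounded monotone integer sequences stabilize. The substantive work (ruling out $a_n=n+B$ eventually, presumably via Lemma~\ref{lem:consecutive} and Corollary~\ref{cor:quad}) belongs to later results and not to this lemma.
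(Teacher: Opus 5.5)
Your proposal is correct and follows essentially the same argument as the paper. Monotonicity from Lemma~\ref{lem:monotone} together with boundedness forces the integer sequence $(b_n)$ to be eventually constant; your attained-supremum argument simply spells out the step the paper states in one line.
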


\begin{proof}
By Lemma~\ref{lem:monotone}, the sequence $(b_n)_{n\ge1}$ is a nondecreasing sequence of integers. If it is bounded above, then it is eventually constant: there exist $n_0\ge2$ and an integer $B$ such that $b_n=B$ for all $n\ge n_0$. Hence $a_n=n+b_n=n+B$ for all $n\ge n_0$.
\end{proof}

From now on we assume, for contradiction, that $(b_n)_{n\ge1}$ is bounded above and hence that
\begin{equation}\label{eq:tail}
a_n=n+B\qquad(n\ge n_0)
\end{equation}
for some integers $n_0\ge2$ and $B$.

\subsection{Consecutive-block decompositions beyond the linear tail}

Set $T:=n_0+B$ and
\[
S_{n_0-1}:=\sum_{i=1}^{n_0-1}a_i,\qquad \mathcal{C}:=\Bigl\{\sum_{k=p}^{n_0-1}a_k:\ 1\le p\le n_0\Bigr\}.
\]
(Here $p=n_0$ contributes the empty sum $0\in\mathcal{C}$.) The next lemma isolates the only two possible shapes of a large consecutive-block representation once the tail is linear.

\begin{lemma}\label{lem:types}
Assume \eqref{eq:tail}. Let $t>S_{n_0-1}$, and suppose that
\[
t=\sum_{k=p}^{q} a_k
\qquad\text{for some }1\le p<q.
\]
Then $q\ge n_0$, and exactly one of the following holds:
\begin{itemize}
\item[\textup{(i)}] $p\ge n_0$, in which case \(t=\sum_{j=p+B}^{q+B} j\);
\item[\textup{(ii)}] $p<n_0\le q$, in which case \(t=C+\sum_{j=T}^{q+B} j\) for some $C\in\mathcal C$.
\end{itemize}
\end{lemma}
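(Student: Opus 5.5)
The argument is a direct case analysis on the position of the block $[p,q]$ relative to $n_0$, using only positivity of the terms and the linear tail \eqref{eq:tail}.

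First I show $q\ge n_0$. Suppose instead $q\le n_0-1$. All $a_k$ are positive, so
\[
t=\sum_{k=p}^{q}a_k\le\sum_{k=1}^{n_0-1}a_k=S_{n_0-1}.
\]
This contradicts $t>S_{n_0-1}$, so $q\ge n_0$. The alternatives $p\ge n_0$ and $p<n_0$ are mutually exclusive and exhaustive, so exactly one of (i), (ii) holds, provided each yields the stated formula.

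In case (i), every index $k\in[p,q]$ satisfies $k\ge n_0$, so \eqref{eq:tail} gives $a_k=k+B$. Substituting $j=k+B$ gives
\[
t=\sum_{j=p+B}^{q+B}j.
\]
In case (ii), I split the sum at $n_0$:
\[
t=\sum_{k=p}^{n_0-1}a_k+\sum_{k=n_0}^{q}a_k.
\]
The first piece lies in $\mathcal C$, because $1\le p\le n_0-1$. In the second piece \eqref{eq:tail} applies to every term, and the substitution $j=k+B$ turns it into $\sum_{j=n_0+B}^{q+B}j=\sum_{j=T}^{q+B}j$, since $T=n_0+B$.

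There is no real obstacle here. The only points needing care are the strictness $t>S_{n_0-1}$ in the first step and the correct shift of the summation limits by $B$. Note also that the definition of $\mathcal C$ allows $p=n_0$ (the empty sum), but in case (ii) we have $p<n_0$, so the chosen element of $\mathcal C$ is a genuine nonempty sum.
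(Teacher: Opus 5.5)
Your proposal is correct and follows essentially the same route as the paper. Both use positivity of the terms to force $q\ge n_0$, substitute the linear tail $a_k=k+B$ directly in case (i), and split the sum at $n_0$ in case (ii) to produce an element of $\mathcal C$ plus $\sum_{j=T}^{q+B} j$.
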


\begin{proof}
If $q\le n_0-1$, then
\[
t=\sum_{k=p}^{q} a_k\le \sum_{k=1}^{n_0-1} a_k=S_{n_0-1},
\]
contrary to the assumption $t>S_{n_0-1}$. Hence $q\ge n_0$.

If $p\ge n_0$, then by \eqref{eq:tail},
\[
t=\sum_{k=p}^{q}(k+B)=\sum_{j=p+B}^{q+B} j,
\]
which is \textup{(i)}.

If $p<n_0\le q$, then splitting the sum at $n_0$ gives
\[
t=\sum_{k=p}^{n_0-1}a_k+\sum_{k=n_0}^{q}a_k
=C+\sum_{k=n_0}^{q}(k+B)
=C+\sum_{j=T}^{q+B} j
\]
for some $C\in\mathcal C$, which is \textup{(ii)}.

The two alternatives are mutually exclusive, since they impose opposite inequalities on $p$.
\end{proof}

\subsection{Powers of two force a quadratic exponential equation}

The linear tail \eqref{eq:tail} implies that sufficiently large powers of two appear among the $a_n$. We now analyze the corresponding representations of these powers of two as sums of consecutive earlier terms.

\begin{lemma}\label{lem:powers}
Assume \eqref{eq:tail}.  For every integer $r$ with
\begin{equation}\label{eq:2rtsbs1}
2^r\ge \max\{T,\ S_{n_0-1}+1,\ B+3\},
\end{equation}
there exist $C\in\mathcal C$ and an integer $v\ge T$ such that
\begin{equation}\label{eq:quad}
2^{r+1}=v^2+v+E_C,\qquad E_C:=2C-(T-1)T.
\end{equation}
\end{lemma}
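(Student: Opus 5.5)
We aim to show that $2^r$ is a term of the sequence, write it as a sum of consecutive earlier terms, and then rule out type (i) of Lemma~\ref{lem:types} using Lemma~\ref{lem:consecutive}.

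\emph{Step 1: $2^r$ is a term.} Since $2^r\ge T=a_{n_0}$ and the tail \eqref{eq:tail} consists of all integers $\ge T$, the number $2^r$ equals $a_m$ with $m=2^r-B\ge n_0$. Here $m\ge3$, because $2^r\ge B+3$. Hence, by the definition \eqref{eq:hof}, we can write $2^r=\sum_{k=p}^{q}a_k$ with $1\le p<q\le m-1$.

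\emph{Step 2: exclude type (i).} Since $2^r>S_{n_0-1}$, Lemma~\ref{lem:types} applies. In case (i), $2^r=\sum_{j=p+B}^{q+B} j$ is a sum of $q-p+1\ge2$ consecutive integers. The first of these is $p+B=a_p\ge1$, so they are positive integers. Lemma~\ref{lem:consecutive} then says $2^r$ is not a power of $2$, which is absurd. So case (ii) holds:
\[
2^r=C+\sum_{j=T}^{q+B} j
\]
for some $C\in\mathcal C$.

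\emph{Step 3: algebra.} Put $v:=q+B$. Since $q\ge n_0$, we get $v\ge n_0+B=T$. Doubling the identity from Step 2 gives
\[
2^{r+1}=2C+v(v+1)-(T-1)T=v^2+v+E_C,
\]
which is exactly \eqref{eq:quad}.

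The only subtle step is Step 2, where we must confirm that the consecutive integers in case (i) are positive so that Lemma~\ref{lem:consecutive} applies; this holds because $p+B=a_p\ge1$. The hypothesis $2^r\ge B+3$ is used only to guarantee $m\ge3$, so that the recursive definition governs $a_m$. Everything else is bookkeeping.
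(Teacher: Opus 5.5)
Your proof is correct and follows essentially the same route as the paper: you realize $2^r$ as the term $a_{2^r-B}$, apply Lemma~\ref{lem:types}, exclude case (i) via Lemma~\ref{lem:consecutive}, and double the case (ii) identity with $v=q+B\ge T$. The only cosmetic difference is that you get positivity of the consecutive integers from $p+B=a_p\ge 1$, whereas the paper uses $p+B\ge T=a_{n_0}>0$.
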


\begin{proof}
Fix $r$ satisfying the inequality~\eqref{eq:2rtsbs1} and set $n=2^r-B$.  Then $n\ge n_0$ and
\eqref{eq:tail} gives $a_n=n+B=2^r$.  Moreover, $n\ge 3$, so by construction of the sequence
there exist indices $1\le p\le q\le n-1$ with $q-p\ge1$ and
\[
2^r=a_n=\sum_{k=p}^{q} a_k.
\]
Since $2^r>S_{n_0-1}$, Lemma~\ref{lem:types} applies.  The alternative \textup{(i)} is impossible:
if $2^r=\sum_{j=p+B}^{q+B}j$ with $q\ge p+1$, then $p\ge n_0$ implies
\[
p+B\ge n_0+B=T=a_{n_0}>0.
\]
Hence $2^r$ would be a sum of at least two consecutive positive integers, contradicting
Lemma~\ref{lem:consecutive}. Therefore we are in case \textup{(ii)}. Thus for some $C\in\mathcal C$ and some $v:=q+B\ge T$ we have
\[
2^r = C+\sum_{j=T}^{v}j = C+\frac{v(v+1)-(T-1)T}{2}.
\]
Multiplying by $2$ yields \eqref{eq:quad}.
\end{proof}

\subsection{Completion of the proof}

\begin{proof}[Proof of Theorem~\ref{thm:qualitative}]
Lemma~\ref{lem:monotone} shows that $(b_n)$ is nondecreasing. It remains to prove that it is unbounded.

Assume, for contradiction, that $(b_n)$ is bounded above. Then \eqref{eq:tail} holds by Lemma~\ref{lem:linear}. By Lemma~\ref{lem:powers}, for every sufficiently large $r$ there exist $C\in\mathcal{C}$ and $v\ge T$ such that \eqref{eq:quad} holds. The set
\[
\mathcal{E}:=\{E_C:\ C\in\mathcal{C}\}
\]
is finite. Hence, by the pigeonhole principle, there exists some $E\in\mathcal{E}$ for which the equation
\[
v^2+v+E=2^k
\]
has infinitely many integer solutions $(v,k)\in\ZZ^2$ with $k>2$. This contradicts Corollary~\ref{cor:quad}. Therefore $(b_n)$ is unbounded.

Since $(b_n)$ is a nondecreasing unbounded sequence of integers, we have $b_n\to\infty$, and hence
\[
a_n=n+b_n=n+\omega(1).
\]
Finally, because the sequence $(a_n)$ is strictly increasing, exactly $n$ elements of $\{a_m:m\ge1\}$ lie in the interval $[1,a_n]$, namely $a_1,\dots,a_n$. Therefore at least
\[
a_n-n=b_n
\]
positive integers in $[1,a_n]$ are omitted by the sequence. As $b_n\to\infty$, the set $\{a_n:n\ge1\}$ omits infinitely many positive integers.
\end{proof}

\section{A quantitative lower bound for the classical sequence}\label{sec:quant-lower}

The following explicit consequence of Beukers' work~\cite[Corollary~1]{Beukers1981} will be used.

\begin{lemma}[\cite{Beukers1981}]\label{lem:beukers-explicit}
Let $D\in\ZZ\setminus\{0\}$. Every integer solution $(x,m)\in\ZZ^2$ of
\[
x^2+D=2^m
\]
satisfies
\[
m<435+\frac{10\log|D|}{\log 2}.
\]
\end{lemma}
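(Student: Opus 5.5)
The plan is to obtain the statement directly from Beukers' explicit results in \cite{Beukers1981}, splitting according to the sign of $D$ and then observing that the stated bound is the weaker of the two bounds Beukers gives. Only $x^2$ appears in the equation, so I may assume $x\ge 0$. I may also discard trivial cases first. If $m\le 0$, then $2^m\notin\ZZ$ unless $m=0$, and then $m=0$ obviously satisfies the bound. If $x=0$, then $D=2^m$ and $m=\log|D|/\log 2$, which is clearly below $435+10\log|D|/\log 2$. So we are left with $x\ge1$ and $m\ge1$.

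\emph{Case $D>0$.} This is the generalized Ramanujan--Nagell equation. Beukers' theorem for positive $D$ bounds any solution of $x^2+D=2^m$ by $2^m\ll D^{c}$ with an explicit small exponent; in his formulation the bound is $m<18+2\log D/\log 2$. I would quote this bound and check that the normalisation matches. Beukers writes the equation as $x^2+D=2^{n+2}$, so I must make sure the shift by $2$ in the exponent is absorbed into the constant. Since $18+2\log D/\log 2<435+10\log D/\log 2$ for $D\ge1$, the claim follows in this case.

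\emph{Case $D<0$.} Write $D=-|D|$, so the equation becomes $x^2-|D|=2^m$. This is the case with the large constants: Beukers' hypergeometric (Thue--Siegel type) method gives $m<435+10\log|D|/\log 2$, and this is precisely his Corollary~1. Here too I would carefully translate his exponent convention ($2^{n}$ versus $2^{n+2}$) into ours, and check that any parity or coprimality hypotheses he imposes (e.g.\ $x$ odd, or $D$ odd) are harmless. For the latter, suppose $D$ is even and $x$ is even, so that $4\mid x^2$. Then the $2$-adic valuation forces $v_2(D)\ge 2$ unless $m$ is small. Dividing through by $4$ gives $(x/2)^2+D/4=2^{m-2}$, so we may reduce to a smaller $|D|$. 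Applying the bound to $D/4$ only improves the inequality, because $435+10\log(|D|/4)/\log2+2<435+10\log|D|/\log2$. Iterating this descent reaches either an odd $D$ or the trivial cases already treated.

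The main obstacle is purely bookkeeping: matching Beukers' precise hypotheses and normalisation to the clean uniform statement of the lemma, in particular the descent on the $2$-adic valuation of $D$ in the case where Beukers requires $D$ odd. No new analytic input is needed; the substantive content is entirely in \cite[Corollary~1]{Beukers1981}.
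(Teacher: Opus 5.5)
Your proposal matches the paper, which gives no argument of its own: it simply quotes Beukers' Corollary~1 as already giving the uniform bound $m<435+10\log|D|/\log 2$ for every nonzero $D$, in the normalisation $x^2+D=2^m$. Your trivial cases, sign split and $2$-adic descent to odd $D$ are correct, but they are only needed if one starts from a differently normalised or parity-restricted form of Beukers' theorems.
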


\begin{lemma}\label{lem:local-power-plateau}
Let $\widehat{B}\ge 1$, and suppose that
\[
b_n=\widehat{B}\qquad (n_1\le n\le n_2)
\]
for some integers $2\le n_1\le n_2$. Set
\[
\widehat{T}:=n_1+\widehat{B},\qquad \widehat{S}:=\sum_{i=1}^{n_1-1} a_i.
\]
If $r\in\ZZ$ satisfies
\begin{equation}\label{eq:qqqwwweee1}
\max\{\widehat{T},\ \widehat{S}+1,\ \widehat{B}+3\}\le 2^r\le n_2+\widehat{B},
\end{equation}
then there exist integers $x$ and $D\neq 0$ such that
\[
2^{r+3}=x^2+D
\qquad\text{and}\qquad
|D|\le 13\widehat{T}^2.
\]
\end{lemma}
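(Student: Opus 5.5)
The plan is to localize the argument of Lemma~\ref{lem:powers} to the plateau $[n_1,n_2]$ and then complete the square so that Lemma~\ref{lem:beukers-explicit} can be applied.

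Set $n:=2^r-\widehat{B}$. The right inequality in \eqref{eq:qqqwwweee1} gives $n\le n_2$. The left inequality gives $n\ge n_1$, since $2^r\ge\widehat{T}=n_1+\widehat{B}$, and it also gives $n\ge3$. Hence $a_n=n+\widehat{B}=2^r$. By the greedy construction, $2^r=\sum_{k=p}^{q}a_k$ with $1\le p<q\le n-1$. On the indices $n_1\le k\le n-1\le n_2$ we have $a_k=k+\widehat{B}$, which is the analogue of the linear tail \eqref{eq:tail}.

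The argument of Lemma~\ref{lem:types} now runs verbatim.
\begin{itemize}
\item Since $2^r>\widehat{S}$, we must have $q\ge n_1$.
\item If $p\ge n_1$, then $2^r$ is a sum of at least two consecutive positive integers starting at $p+\widehat{B}\ge \widehat{T}>0$. This contradicts Lemma~\ref{lem:consecutive}.
\item So $p<n_1\le q$, and we can write $2^r=C+\sum_{j=\widehat{T}}^{v}j$ with $v=q+\widehat{B}\ge\widehat{T}$ and $C=\sum_{k=p}^{n_1-1}a_k\in[1,\widehat{S}]$.
\end{itemize}
This gives $2^{r+1}=v^2+v+2C-(\widehat{T}-1)\widehat{T}$. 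Multiplying by $4$ and completing the square yields
\[
2^{r+3}=(2v+1)^2+D,\qquad D:=8C-4(\widehat{T}-1)\widehat{T}-1 .
\]
Take $x:=2v+1$. Then $D$ is odd, and in particular $D\neq0$.

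The main obstacle is bounding $|D|$ by $13\widehat{T}^2$. The $\widehat{T}$-term is harmless, since $4(\widehat{T}-1)\widehat{T}+1\le 4\widehat{T}^2$. The difficulty is $C$, which is bounded by $\widehat{S}$, and $\widehat{S}$ is not obviously $O(\widehat{T}^2)$. I would try to show $\widehat{S}\le \frac{9}{8}\widehat{T}^2$ or similar, using $a_i\le a_{n_1-1}<\widehat{T}$ for $i<n_1$.

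A first attempt uses $\widehat{S}\le\sum_{i<n_1}a_i$ with the $a_i$ distinct positive integers below $\widehat{T}$. This gives $\widehat{S}\le \widehat{T}(\widehat{T}-1)/2$, and hence $8C\le 4\widehat{T}^2$. Then $|D|\le 8\widehat{T}^2$, which is comfortably under $13\widehat{T}^2$.

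Since the $a_i$ are strictly increasing and all less than $a_{n_1}=\widehat{T}$, this bound on $\widehat{S}$ is in fact immediate. So the only real content is the reduction to case (ii) and the completing-the-square step. The constant $13$ leaves slack, presumably to absorb a cruder estimate.
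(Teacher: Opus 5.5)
Your proposal is correct and follows the paper's proof essentially step for step. You set $n=2^r-\widehat{B}$ inside the plateau, rule out $p\ge n_1$ via Lemma~\ref{lem:consecutive}, and complete the square to get $D=8\widehat{C}-4\widehat{T}(\widehat{T}-1)-1$, odd hence nonzero. The only difference is how you bound $\widehat{S}$: you use that $a_1<\cdots<a_{n_1-1}$ are distinct positive integers below $a_{n_1}=\widehat{T}$ to get $\widehat{S}\le\widehat{T}(\widehat{T}-1)/2$, while the paper uses $b_i\le\widehat{B}$ to get the cruder $\widehat{S}\le\widehat{T}^2$ (hence its constant $13$), so yours even gives $|D|\le 8\widehat{T}^2$.
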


\begin{proof}
Since $(b_n)_{n\ge 1}$ is nondecreasing by Lemma~\ref{lem:monotone} and $b_{n_1}=\widehat{B}$, we have
\[
b_i\le \widehat{B} \qquad (1\le i<n_1).
\]
Hence
\[
\widehat{S}=\sum_{i=1}^{n_1-1} a_i
 =\sum_{i=1}^{n_1-1}(i+b_i)
 \le \sum_{i=1}^{n_1-1}(i+\widehat{B})
 \le \widehat{T}^2.
\]

Now fix $r$ satisfying \eqref{eq:qqqwwweee1}, and set $\widehat{n}:=2^r-\widehat{B}$. Then
\[
n_1\le \widehat{n}\le n_2,
\]
so $a_{\widehat{n}}=\widehat{n}+\widehat{B}=2^r$. Also $\widehat{n}\ge 3$, and therefore the greedy definition of the classical sequence~\eqref{eq:hof} yields indices $1\le p<q\le \widehat{n}-1$ such that
\[
2^r=a_{\widehat{n}}=\sum_{k=p}^{q} a_k.
\]
Since $2^r>\widehat{S}$, we must have $q\ge n_1$. If $p\ge n_1$, then
\[
2^r=\sum_{k=p}^{q}(k+\widehat{B})=\sum_{j=p+\widehat{B}}^{q+\widehat{B}} j
\]
is a sum of at least two consecutive positive integers, contradicting Lemma~\ref{lem:consecutive}. Hence $p<n_1\le q$.

Set
\[
\widehat{C}:=\sum_{k=p}^{n_1-1} a_k,\qquad \widehat{v}:=q+\widehat{B}.
\]
Then $0\le \widehat{C}\le \widehat{S}$ and
\[
2^r
= \widehat{C}+\sum_{j=\widehat{T}}^{\widehat{v}} j
= \widehat{C}+\frac{\widehat{v}(\widehat{v}+1)-\widehat{T}(\widehat{T}-1)}{2}.
\]
Thus
\[
2^{r+1}=\widehat{v}^2+\widehat{v}+2\widehat{C}-\widehat{T}(\widehat{T}-1),
\]
and therefore
\[
2^{r+3}=(2\widehat{v}+1)^2+D,
\qquad
D:=8\widehat{C}-4\widehat{T}(\widehat{T}-1)-1.
\]
Since $D\equiv -1\pmod 4$, we have $D\neq0$. Finally, using $0\le \widehat{C}\le \widehat{S}\le \widehat{T}^2$, we obtain
\[
|D|
\le 8\widehat{C}+4\widehat{T}(\widehat{T}-1)+1
\le 8\widehat{T}^2+4\widehat{T}^2+1
\le 13\widehat{T}^2.
\]
This proves the lemma.
\end{proof}

We are now ready to prove the quantitative lower bound for the sequence~\eqref{eq:hof}.
\begin{proof}[Proof of Theorem~\ref{thm:quantitative-lower}]
By Theorem~\ref{thm:qualitative}, the sequence $(b_n)$ is nondecreasing and unbounded. Hence, for each integer $\widehat{B}\ge 0$,
\[
N(\widehat{B}):=\max\{n\ge 1:\ b_n\le \widehat{B}\}
\]
is well defined and finite. Set
\[
M(\widehat{B}):=N(\widehat{B})+\widehat{B}+2 \qquad (\widehat{B}\ge 0).
\]
We claim that there exists an absolute constant $K\ge 2$ such that
\begin{equation}\label{eq:MB-recursion}
M(\widehat{B})\le K\,M(\widehat{B}-1)^{20}\qquad(\widehat{B}\ge 1).
\end{equation}

Fix $\widehat{B}\ge 1$. If $N(\widehat{B})=N(\widehat{B}-1)$, then
\[
M(\widehat{B})
=
N(\widehat{B})+\widehat{B}+2
=
N(\widehat{B}-1)+\widehat{B}+2
=
M(\widehat{B}-1)+1.
\]
Since $M(\widehat{B}-1)\ge 2$, it follows that
\[
M(\widehat{B})\le M(\widehat{B}-1)^{20}.
\]

Suppose now that $N(\widehat{B})>N(\widehat{B}-1)$. Since $(b_n)$ is integer-valued
and nondecreasing, we then have
\[
b_n=\widehat{B}
\qquad
\bigl(N(\widehat{B}-1)+1\le n\le N(\widehat{B})\bigr).
\]
Write
\[
n_1:=N(\widehat{B}-1)+1,\qquad
n_2:=N(\widehat{B}),\qquad
\widehat{T}:=n_1+\widehat{B}.
\]
Then
\[
\widehat{T}
=
N(\widehat{B}-1)+1+\widehat{B}
=
N(\widehat{B}-1)+(\widehat{B}-1)+2
=
M(\widehat{B}-1).
\]

Let
\[
K_0:=2^{432}13^{10}.
\]
We shall show that
\[
n_2+\widehat{B}\le 2K_0\widehat{T}^{20}.
\]
Assume to the contrary that
\[
n_2+\widehat{B}>2K_0\widehat{T}^{20}.
\]
Then there exists an integer $r$ such that
\[
K_0\widehat{T}^{20}<2^r\le 2K_0\widehat{T}^{20}<n_2+\widehat{B}.
\]

We now verify the hypotheses of Lemma~\ref{lem:local-power-plateau}.
Since $\widehat{T}\ge 2$, we have
\[
2^r>K_0\widehat{T}^{20}\ge \widehat{T}^{20}>\widehat{T}^2.
\]
Moreover, for every $1\le i\le n_1-1$ we have $b_i\le \widehat{B}-1$, because
$n_1=N(\widehat{B}-1)+1$. Hence
\[
a_i=i+b_i\le i+\widehat{B}-1
\qquad (1\le i\le n_1-1),
\]
and therefore
\[
\sum_{i=1}^{n_1-1} a_i
\le
\sum_{i=1}^{n_1-1}(i+\widehat{B}-1)
=
\frac{(n_1-1)(n_1+2\widehat{B}-2)}{2}.
\]
On the other hand,
\[
\widehat{T}^2=(n_1+\widehat{B})^2,
\]
and a direct calculation shows that
\[
2\widehat{T}^2-(n_1-1)(n_1+2\widehat{B}-2)
=
n_1^2+2n_1\widehat{B}+2\widehat{B}^2+3n_1+2\widehat{B}-2>0.
\]
Thus
\[
\sum_{i=1}^{n_1-1} a_i\le \widehat{T}^2<2^r.
\]
Also, from $2^r>\widehat{T}^2$ and $\widehat{T}\ge 2$, we obtain
\[
2^r\ge \widehat{T}.
\]
Finally, since $n_1\ge 2$, we have
\[
\widehat{T}=n_1+\widehat{B}\ge \widehat{B}+2,
\]
and therefore
\[
2^r>\widehat{T}^2\ge (\widehat{B}+2)^2>\widehat{B}+3,
\]
so in particular
\[
2^r\ge \widehat{B}+3.
\]

Hence Lemma~\ref{lem:local-power-plateau} applies, and yields integers $x$ and
$D\neq 0$ such that
\[
2^{r+3}=x^2+D
\qquad\text{and}\qquad
|D|\le 13\widehat{T}^2.
\]
Applying Lemma~\ref{lem:beukers-explicit}, we obtain
\[
r+3<435+\frac{10\log|D|}{\log 2}
\le
435+\frac{10\log(13\widehat{T}^2)}{\log 2}.
\]
Exponentiating, we get
\[
2^r<2^{432}13^{10}\widehat{T}^{20}=K_0\widehat{T}^{20},
\]
contrary to the choice of $r$. This contradiction proves that
\[
n_2+\widehat{B}\le 2K_0\widehat{T}^{20}.
\]

Since $n_2=N(\widehat{B})$ and $\widehat{T}=M(\widehat{B}-1)$, it follows that
\[
N(\widehat{B})+\widehat{B}\le 2K_0\,M(\widehat{B}-1)^{20},
\]
and hence
\[
M(\widehat{B})
=
N(\widehat{B})+\widehat{B}+2
\le
(2K_0+2)\,M(\widehat{B}-1)^{20}.
\]
Thus \eqref{eq:MB-recursion} holds with
\[
K:=2K_0+2.
\]

Taking logarithms in \eqref{eq:MB-recursion} and iterating, we obtain
\[
\log M(\widehat{B})
\le
20^{\widehat{B}}\log M(0)
+
\bigl(1+20+\cdots+20^{\widehat{B}-1}\bigr)\log K
\]
for all $\widehat{B}\ge 0$, whence
\[
\log M(\widehat{B})
\le
20^{\widehat{B}}\log M(0)
+
\frac{20^{\widehat{B}}-1}{19}\log K
\le
\widehat{A}\,20^{\widehat{B}},
\]
where
\[
\widehat{A}:=\log M(0)+\frac{\log K}{19}.
\]

Now fix $n\ge 2$ and put $\widehat{B}=b_n$. Then by definition of $N(\widehat{B})$, we have
$n\le N(\widehat{B})<M(\widehat{B})$, and therefore
\[
\log n\le \log M(\widehat{B})\le \widehat{A}\,20^{\widehat{B}}.
\]
Taking logarithms once more, we get
\[
\widehat{B}=b_n\ge \frac{\log\log n-\log \widehat{A}}{\log 20}.
\]
Finally, since $a_n=n+b_n$, it follows that
\[
a_n\ge n+\frac{\log\log n}{\log 20}-\frac{\log \widehat{A}}{\log 20}.
\]
This proves the theorem.
\end{proof}

\section{A polynomial upper bound for the classical sequence}\label{sec:upper-bound}

In this section we prove Theorem~\ref{thm:upper-final}. The argument combines the greedy ordering of the representable integers with a lower bound for difference sets of finite convex sets.

\subsection{Greedy enumeration of the representable integers}

We begin by isolating the set of integers that can be represented as sums of at least two
consecutive terms. The key point is that the greedy construction does not merely produce
some such integers: it lists all of them in increasing order.

Define the partial sums
\[
s_0:=0,\qquad s_m:=\sum_{i=1}^m a_i\quad(m\ge 1),
\]
and the prefix-sum set
\[
\widetilde{S}_m:=\{s_0,s_1,\dots,s_m\}.
\]
For $m\ge 2$ let
\[
\mathcal R_m:=\Bigl\{\sum_{i=p}^q a_i:\ 1\le p<q\le m\Bigr\}\subset \mathbb N,
\qquad
\mathcal R:=\bigcup_{m\ge 2}\mathcal R_m.
\]
Thus $\mathcal R$ is the set of all integers representable as a sum of at least two
consecutive terms of the sequence.

\begin{lemma}\label{lem:greedy-enumerates-R}
We have $\{a_k: k\ge 3\}=\mathcal R$, and $(a_3,a_4,\dots)$ is the increasing enumeration
of $\mathcal R$. In particular, for every $X\ge 1$,
\begin{equation}\label{eq:count-implies-an}
\#(\mathcal R\cap[1,X])\ \ge\ n
\quad\Longrightarrow\quad
a_{n+2}\le X.
\end{equation}
\end{lemma}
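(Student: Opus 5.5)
We prove the two inclusions $\{a_k:k\ge3\}\subseteq\mathcal R$ and $\mathcal R\subseteq\{a_k:k\ge3\}$, then read off the enumeration and the counting implication \eqref{eq:count-implies-an}.

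\textbf{First inclusion.} This is immediate from the definition \eqref{eq:hof}: for $k\ge3$ the term $a_k$ is a sum $\sum_{i=p}^q a_i$ with $1\le p<q\le k-1$, so $a_k\in\mathcal R_{k-1}\subseteq\mathcal R$.

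\textbf{Reverse inclusion.} Let $t\in\mathcal R$, say $t=\sum_{i=p}^q a_i$ with $p<q$. Since all terms are positive and $q\ge2$, we have $t\ge a_q+a_{q-1}>a_q$. The sequence is strictly increasing and integer-valued, hence unbounded, so there is a least index $k$ with $a_k\ge t$; moreover $k>q\ge 2$, so $k\ge3$. By minimality, $a_{k-1}<t$. Also $q\le k-1$, so $t$ is a sum of at least two consecutive terms among $a_1,\dots,a_{k-1}$, i.e.\ $t$ is an admissible candidate at step $k$. Since $a_k$ is the least admissible integer exceeding $a_{k-1}$, we get $a_k\le t$, and hence $a_k=t$. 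Thus $\mathcal R\subseteq\{a_k:k\ge3\}$.

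\textbf{Enumeration and counting.} Because $(a_k)_{k\ge3}$ is strictly increasing with image exactly $\mathcal R$, it is the increasing enumeration of $\mathcal R$. For \eqref{eq:count-implies-an}: if $\mathcal R\cap[1,X]$ has at least $n$ elements, then the $n$-th smallest element of $\mathcal R$, which is $a_{n+2}$, lies in $[1,X]$, so $a_{n+2}\le X$.

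The only point needing care is the reverse inclusion. There we must ensure that the representation of $t$ uses only indices below the step $k$ at which $t$ would be reached. This is exactly where the bound $t>a_q$, together with the choice of $k$ via $a_{k-1}<t\le a_k$, is used.
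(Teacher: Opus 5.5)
Your proof is correct and follows essentially the same route as the paper. The paper first proves a separate claim that no element of $\mathcal R$ lies strictly between $a_{k-1}$ and $a_k$, using that every summand is at most $x<a_k$ to force $q\le k-1$, and then obtains the reverse inclusion by taking the minimal $k\ge3$ with $a_k\ge x$; you fold both steps into one, and you also justify $k\ge 3$ and $a_{k-1}<t$ explicitly via $k>q\ge 2$, a point the paper leaves implicit.
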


\begin{proof}
First note that $a_k\in\mathcal R_{k-1}\subset \mathcal R$ for every $k\ge 3$ by definition, so
$\{a_k:k\ge 3\}\subseteq \mathcal R$.

Fix $k\ge 3$. We claim that there is no element of $\mathcal R$ strictly between $a_{k-1}$ and $a_k$.
Indeed, suppose for contradiction that there exists $x\in\mathcal R$ with
\[
a_{k-1}<x<a_k.
\]
Choose a representation $x=\sum_{i=p}^q a_i$ with $p<q$. Since all terms are positive, every summand
satisfies $a_i\le x<a_k$. As $(a_i)$ is strictly increasing, this forces $q\le k-1$, so in fact
$x\in\mathcal R_{k-1}$. But then $x$ is a representable integer using the first $k-1$ terms and
$x>a_{k-1}$, contradicting the greedy choice of $a_k$ as the least such representable integer.
This proves the claim.

It remains to show the reverse inclusion $\mathcal R\subseteq \{a_k:k\ge3\}$.
Let $x\in\mathcal R$. Since $(a_k)$ is strictly increasing with $a_k\ge a_{k-1}+1$,
we have $a_k\to\infty$, so there exists a minimal $k\ge3$ such that $a_k\ge x$.
Then $a_{k-1}<x\le a_k$. If $x<a_k$, this would place $x$ in the interval
$(a_{k-1},a_k)$, contradicting the claim proved above that $\mathcal R$ contains
no element strictly between $a_{k-1}$ and $a_k$. Hence $x=a_k$, proving
$\mathcal R\subseteq\{a_k:k\ge3\}$. Consequently, for each $k\ge 3$, the number $a_k$ is the smallest element of $\mathcal R$ that is
greater than $a_{k-1}$. Therefore $(a_3,a_4,\dots)$ lists the elements of $\mathcal R$ in strictly
increasing order, i.e.\ it is the increasing enumeration of $\mathcal R$.

Finally, \eqref{eq:count-implies-an} follows immediately: if $\#(\mathcal R\cap[1,X])\ge n$ then the
$n$-th element of $\mathcal R$ is $\le X$, hence $a_{n+2}\le X$.
\end{proof}

The preceding lemma reduces the problem of bounding $a_n$ to that of producing many
representable integers below a controlled threshold. The next observation makes this reduction
quantitative at a finite stage.

\begin{lemma}\label{lem:Rm-to-an}
For every $m\ge 2$, we have $\mathcal R_m\subseteq \mathcal R\cap[1,s_m]$. In particular, if
$|\mathcal R_m|\ge n$ then
\begin{equation}\label{eq:an-le-sm}
a_{n+2}\le s_m.
\end{equation}
\end{lemma}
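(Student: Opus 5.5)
The plan is to prove the inclusion directly and then read off \eqref{eq:an-le-sm} from \eqref{eq:count-implies-an} in Lemma~\ref{lem:greedy-enumerates-R}.

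For the inclusion, fix $m\ge2$ and take an element $x=\sum_{i=p}^q a_i$ of $\mathcal R_m$ with $1\le p<q\le m$. First, $x\in\mathcal R$, since $\mathcal R_m$ is one of the sets in the union defining $\mathcal R$. Second, all $a_i$ are positive integers, so $x\ge a_p+a_q\ge 1$. Third, the sum over $[p,q]$ is a sub-sum of the sum over $[1,m]$ with nonnegative terms omitted, so $x\le \sum_{i=1}^m a_i=s_m$. Together these give $x\in\mathcal R\cap[1,s_m]$.

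For the consequence, suppose $|\mathcal R_m|\ge n$. By the inclusion,
\[
\#(\mathcal R\cap[1,s_m])\ge|\mathcal R_m|\ge n .
\]
Applying \eqref{eq:count-implies-an} with $X=s_m\ge1$ then yields $a_{n+2}\le s_m$.

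I do not expect any real obstacle. The only care needed is that $\mathcal R_m$ is defined as a set, so $|\mathcal R_m|$ counts distinct values. Repeated representations are therefore not overcounted, and the cardinality comparison with $\mathcal R\cap[1,s_m]$ is legitimate.
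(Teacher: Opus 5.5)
Your proof is correct and follows the paper's argument essentially verbatim. The inclusion comes from $\mathcal R_m$ being one of the sets in the union defining $\mathcal R$, together with positivity of the terms bounding any consecutive sub-sum by $s_m$, and the consequence is read off from \eqref{eq:count-implies-an} with $X=s_m$; your explicit lower bound $x\ge1$ and the remark about counting distinct values are harmless details the paper leaves implicit.
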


\begin{proof}
Every element of $\mathcal R_m$ is a sum of at least two consecutive terms among $a_1,\dots,a_m$,
hence belongs to $\mathcal R$. Such a sum is clearly at most $a_1+\cdots+a_m=s_m$, so
$\mathcal R_m\subseteq \mathcal R\cap[1,s_m]$.

If $|\mathcal R_m|\ge n$, then $\#(\mathcal R\cap[1,s_m])\ge n$, and \eqref{eq:an-le-sm} follows from
Lemma~\ref{lem:greedy-enumerates-R} and \eqref{eq:count-implies-an}.
\end{proof}

We now turn to the task of estimating $|\mathcal R_m|$. The key idea is to rewrite sums of
consecutive terms as differences of prefix sums, thereby bringing in additive-combinatorial
results on convex sets.

\subsection{From consecutive sums to a convex difference set}

We recall the standard additive-combinatorial notion of convexity. A finite set \(Y=\{y_1<y_2<\cdots<y_n\}\subset \mathbb R\) is called \emph{convex} if its consecutive differences form a strictly increasing sequence, that is,
\[
y_{i+1}-y_i<y_{i+2}-y_{i+1}
\qquad(1\le i\le n-2).
\]
We now observe that the prefix-sum set $\widetilde{S}_m$ is convex in this sense.

\begin{lemma}\label{lem:prefix-sums-convex}
For each $m\ge 2$, the set $\widetilde{S}_m=\{s_0<s_1<\cdots<s_m\}$ is convex, i.e.
\[
s_i-s_{i-1}<s_{i+1}-s_i\qquad(1\le i\le m-1).
\]
\end{lemma}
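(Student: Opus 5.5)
The plan is to read off the consecutive differences of the prefix sums directly. By definition $s_0=0$ and $s_i=\sum_{k=1}^i a_k$, so $s_i-s_{i-1}=a_i$ for every $i\ge1$. Convexity of $\widetilde{S}_m$ is therefore equivalent to the chain of inequalities
\[
a_i<a_{i+1}\qquad(1\le i\le m-1).
\]
This is exactly the strict monotonicity of $(a_n)$. It holds by construction: $a_1=1<a_2=2$, and for $k\ge3$ the greedy rule chooses $a_k$ to be greater than $a_{k-1}$. (Equivalently, it is the inequality $a_{n+1}\ge a_n+1$ already used in the proof of Lemma~\ref{lem:monotone}.)

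Before comparing differences, I will also check that $\widetilde{S}_m$ is listed in increasing order, $s_0<s_1<\cdots<s_m$, so that the convexity definition applies to this indexing. This follows because each $a_i\ge1>0$, so the partial sums increase strictly and the elements are distinct.

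Finally, I will match the index ranges. The definition of convexity for $\{y_1<\cdots<y_{m+1}\}$ with $y_j=s_{j-1}$ asks for $y_{j+1}-y_j<y_{j+2}-y_{j+1}$ for $1\le j\le m-1$. With $i=j$, this is the same as $s_i-s_{i-1}<s_{i+1}-s_i$ for $1\le i\le m-1$, which is the displayed claim. No step here is a genuine obstacle; the only point needing care is this reindexing between $y_j$ and $s_{j-1}$.
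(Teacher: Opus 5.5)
Your proposal is correct and is essentially the paper's proof: identify $s_i-s_{i-1}=a_i$ and invoke the strict monotonicity of $(a_i)$ from the greedy construction. The extra checks (strict increase of the $s_i$ and the reindexing $y_j=s_{j-1}$) are accurate and simply spell out what the paper leaves implicit.
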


\begin{proof}
We have $s_i-s_{i-1}=a_i$ for all $i\ge 1$, and $(a_i)$ is strictly increasing by construction.
\end{proof}

Since every sum of consecutive terms is a difference of two prefix sums, the difference set \(\widetilde{S}_m-\widetilde{S}_m\) provides a natural ambient set in which $\mathcal R_m$ sits. The next lemma makes this relation explicit.

\begin{lemma}\label{lem:Rm-from-diffset}
Let $D_m:=\widetilde{S}_m-\widetilde{S}_m=\{x-y:\ x,y\in \widetilde{S}_m\}$. Then
\[
|\mathcal R_m| \ge \frac{|D_m|-1}{2}-m.
\]
\end{lemma}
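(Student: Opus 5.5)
The plan is to decompose the difference set $D_m$ by sign and compare its positive part with $\mathcal R_m$. Since $\widetilde S_m$ is a set of distinct reals (the $s_i$ are strictly increasing because all $a_i>0$), $D_m$ is symmetric about $0$ and contains $0$. Hence
\[
|D_m| = 1 + 2\,|D_m^+|,\qquad D_m^+:=D_m\cap(0,\infty),
\]
so $|D_m^+|=(|D_m|-1)/2$. It therefore suffices to show $|D_m^+\setminus \mathcal R_m|\le m$, because then $|\mathcal R_m|\ge |D_m^+\cap\mathcal R_m|\ge |D_m^+|-m$.

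Every positive element of $D_m$ has the form $s_q-s_{p-1}$ with $0\le p-1<q\le m$, i.e. $\sum_{i=p}^q a_i$ with $1\le p\le q\le m$. If $p<q$, this sum is by definition in $\mathcal R_m$. The only remaining positive differences are the single-term sums with $p=q$, namely $s_p-s_{p-1}=a_p$ for $1\le p\le m$. There are at most $m$ such values, so
\[
D_m^+\subseteq \mathcal R_m\cup\{a_1,\dots,a_m\},
\]
which gives $|D_m^+|\le |\mathcal R_m|+m$. Rearranging yields the claimed inequality.

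There is no real obstacle. The only points needing care are the symmetry and the zero count, which is why the bound carries the $-1$ and the factor $1/2$, and the correct indexing of prefix-sum differences as consecutive blocks, including blocks starting at $p=1$ via $s_0=0$. Convexity (Lemma~\ref{lem:prefix-sums-convex}) is not needed here; it enters only later, when $|D_m|$ is bounded below.
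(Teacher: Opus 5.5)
Your proposal is correct and follows the paper's proof essentially step for step. You use the symmetry of $D_m$ to get $|D_m^+|=(|D_m|-1)/2$, write each positive difference $s_j-s_i$ as a consecutive block sum, note that single-term blocks contribute only $a_1,\dots,a_m$, and conclude from $D_m^+\subseteq \mathcal R_m\cup\{a_1,\dots,a_m\}$.
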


\begin{proof}
Let
\[
D_m^+:=\{d\in D_m:\ d>0\}.
\]
Since $D_m$ is symmetric and contains $0$, we have
\[
|D_m^+|=\frac{|D_m|-1}{2}.
\]
Every $d\in D_m^+$ can be written as
\[
d=s_j-s_i=\sum_{t=i+1}^j a_t
\qquad(0\le i<j\le m).
\]
If $j-i=1$, then $d=a_j\in\{a_1,\dots,a_m\}$.
If $j-i\ge2$, then $d\in\mathcal R_m$.
Therefore
\[
D_m^+\subseteq \{a_1,\dots,a_m\}\cup \mathcal R_m.
\]
Hence
\[
\frac{|D_m|-1}{2}=|D_m^+|\le m+|\mathcal R_m|,
\]
which rearranges to
\[
|\mathcal R_m|\ge \frac{|D_m|-1}{2}-m.
\qedhere\]
\end{proof}

\subsection{Bloom's convex difference-set bound and the basic recursion}

We now combine the convexity of the prefix-sum set with a recent difference-set estimate of Bloom.
This will yield a lower bound for $|\mathcal R_m|$, and hence, via the greedy enumeration established
above, a recursive upper bound for $a_n$.

The key additive-combinatorial input is the following difference-set bound for finite convex sets, due to Bloom~\cite[Theorem~2]{Blo25}.

\begin{theorem}[\cite{Blo25}]\label{thm:Bloom-convex-diff}
For every $\eta>0$ there exists $c_\eta>0$ such that for every finite convex set $A\subset\mathbb R$,
\[
|A-A| \ge c_\eta\,|A|^{6681/4175-\eta}.
\]
\end{theorem}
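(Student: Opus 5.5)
This statement is quoted from Bloom~\cite[Theorem~2]{Blo25}, and in the paper it is used as a black box. The honest plan is therefore to cite it. If I had to reconstruct the argument, I would follow the standard energy-based route for convex sets. Write $A=\{y_1<\dots<y_n\}$ with strictly increasing gaps, and set $D:=A-A$. The baseline is the Elekes--Nathanson--Ruzsa argument. For each $d\in D$, the points $(y_i,y_j)$ with $y_i-y_j=d$ lie on translates of a convex curve. The Szemer\'edi--Trotter incidence theorem, applied to the family of curves $\{(x,y): y=f(x+s)+t\}$ with $f$ a convex interpolation of $i\mapsto y_i$, then bounds the popular differences. This gives $|D|\gg |A|^{3/2}$ and the third-energy bound $E_3(A)\ll |A|^3\log|A|$, where $E_3(A)=\sum_d r_{A-A}(d)^3$.

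To go beyond $3/2$, I would use the Schoen--Shkredov scheme. Combine the bound on $E_3(A)$ with a lower bound for $E_3$ in terms of $|D|$, obtained from Cauchy--Schwarz on the representation function $r_{A-A}$. Then apply the ``eigenvalue'' or higher-energy technique of Shkredov: consider the operator with kernel $r_{A-A}(x-y)$ restricted to $D$, and bound sums over popular differences $\sum_{d\in P} r_{A-A}(d)^2$ from above, again using convexity and incidences. Balancing the parameters gives the exponent $8/5-o(1)$. Bloom's improvement to $6681/4175=8/5+1/4175$ would come from one further iteration. The idea is to feed a slightly improved estimate for the number of solutions to $a_1-a_2=a_3-a_4$ with all differences popular back into the Schoen--Shkredov optimization. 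That estimate would come from an additional incidence bound on the sets $A\cap(A+d)$ for popular $d$, which inherit convexity. The resulting gain is a small explicit exponent, producing the rational number above and the $-\eta$ loss from logarithmic factors.

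The main obstacle is exactly this last step: beating $8/5$. The earlier steps are standard. The improvement requires a delicate dyadic decomposition of $D$ by popularity and a careful bookkeeping of exponents, and I would not expect to reproduce Bloom's precise constant without his paper. For the purposes of the present paper I would simply invoke \cite[Theorem~2]{Blo25}. Any exponent strictly above $3/2$ (for instance the elementary $|D|\gg|A|^{3/2}$ itself) would already give a polynomial upper bound in Theorem~\ref{thm:upper-final}, only with a worse exponent.
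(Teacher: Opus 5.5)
Your plan matches the paper: the statement is Bloom's \cite[Theorem~2]{Blo25}, which the paper imports as a black box without proof, so your speculative reconstruction (Elekes--Nathanson--Ruzsa, then Schoen--Shkredov up to $8/5$) is not needed and is not checked here. One small correction to your closing remark: the argument of Section~\ref{sec:upper-bound} only needs an exponent $\delta>1$, not ``strictly above $3/2$'' (which your own example $3/2$ would not satisfy); the $3/2$ bound would already give $a_n\ll n^{2+o(1)}$.
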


Applying this estimate to the convex set $\widetilde{S}_m$, and then using the relation between
$\widetilde{S}_m-\widetilde{S}_m$ and $\mathcal R_m$ established in Lemma~\ref{lem:Rm-from-diffset},
we obtain the following lower bound for the number of representable integers up to stage $m$.

\begin{proposition}\label{prop:Rm-lower-bloom}
Fix $0<\eta<2506/4175$ and set $\delta=6681/4175-\eta$. Then there exist constants $m_0\in\mathbb N$ and $c>0$
(depending only on $\eta$) such that for all $m\ge m_0$,
\[
|\mathcal R_m| \ge c\, m^{\delta}.
\]
\end{proposition}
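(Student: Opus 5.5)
The plan is to apply Theorem~\ref{thm:Bloom-convex-diff} to the convex set $\widetilde{S}_m$ and then pass to $\mathcal R_m$ through Lemma~\ref{lem:Rm-from-diffset}. Since the $a_i$ are positive, the prefix sums satisfy $s_0<s_1<\cdots<s_m$. Hence $\widetilde{S}_m$ has exactly $m+1$ elements, and by Lemma~\ref{lem:prefix-sums-convex} it is convex in the sense used by Bloom. Theorem~\ref{thm:Bloom-convex-diff}, applied with the given $\eta$, then yields
\[
|D_m|=|\widetilde{S}_m-\widetilde{S}_m|\ge c_\eta (m+1)^{\delta}\ge c_\eta m^{\delta}.
\]
The last inequality uses $\delta>0$, which holds because $\eta<6681/4175$.

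Next I insert this into Lemma~\ref{lem:Rm-from-diffset}:
\[
|\mathcal R_m|\ge \frac{c_\eta m^{\delta}-1}{2}-m.
\]
To beat the subtracted term $m$, I need $\delta>1$. This is exactly the condition $\eta<6681/4175-1=2506/4175$, which is the hypothesis in the statement. With $\delta>1$, the ratio $m/m^{\delta}\to0$. So there is an $m_0$, depending only on $\eta$ through $c_\eta$ and $\delta$, such that for all $m\ge m_0$ we have $m+\tfrac12\le \tfrac{c_\eta}{4}m^{\delta}$. For these $m$,
\[
|\mathcal R_m|\ge \frac{c_\eta}{4}m^{\delta},
\]
and we may take $c=c_\eta/4$.

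The argument has essentially no obstacle beyond two checks. First, the exponent must exceed $1$, so that the loss of the $m$ single-term differences $a_1,\dots,a_m$ is negligible; this is precisely why the constraint $\eta<2506/4175$ appears. Second, the definition of convexity in Bloom's theorem must match the one verified in Lemma~\ref{lem:prefix-sums-convex}, namely strictly increasing consecutive differences. Once both are confirmed, the estimate is a direct combination of the two earlier results.
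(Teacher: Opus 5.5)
Your proposal is correct and follows essentially the same route as the paper: apply Bloom's bound (Theorem~\ref{thm:Bloom-convex-diff}) to the convex set $\widetilde{S}_m$ of size $m+1$, insert the result into Lemma~\ref{lem:Rm-from-diffset}, and use $\delta>1$ to absorb the subtracted $m+\tfrac12$, giving $c=c_\eta/4$. Your explicit check that $\frac{c_\eta}{4}m^{\delta}\ge m+\tfrac12$ for large $m$ just spells out the step the paper states as ``for all sufficiently large $m$''.
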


\begin{proof}
By Lemma~\ref{lem:prefix-sums-convex}, the set $\widetilde{S}_m$ is convex and $|\widetilde{S}_m|=m+1$. Applying
Theorem~\ref{thm:Bloom-convex-diff} to $A=\widetilde{S}_m$ gives
\[
|D_m|=|\widetilde{S}_m-\widetilde{S}_m| \ge c_\eta\,(m+1)^{\delta}.
\]
Insert this into Lemma~\ref{lem:Rm-from-diffset}:
\[
|\mathcal R_m| \ge \frac{c_\eta(m+1)^{\delta}-1}{2}-m.
\]
For all sufficiently large $m$ the right-hand side is $\ge (c_\eta/4)\,m^{\delta}$, which proves this
proposition with suitable choices of $m_0$ and $c$.
\end{proof}

The proposition shows that $\mathcal R_m$ grows polynomially with exponent $\delta>1$. We now feed this
counting information back into Lemma~\ref{lem:Rm-to-an} to obtain the basic recursion for $(a_n)$.

\begin{lemma}\label{lem:basic-recursion}
Fix $0<\eta<2506/4175$ and let $\delta=6681/4175-\eta>1$. Then there exist constants $C_1\ge 2$
and $N_0$ (depending only on $\eta$) such that for all $n\ge N_0$,
\begin{equation}\label{eq:recursion}
a_n\ \le\ C_1\, n^{1/\delta}\, a_{\lceil C_1\,n^{1/\delta}\rceil}.
\end{equation}
\end{lemma}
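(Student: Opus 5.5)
The plan is to combine Proposition~\ref{prop:Rm-lower-bloom} with Lemma~\ref{lem:Rm-to-an}, choosing $m$ as small as possible so that $|\mathcal R_m|\ge n-2$, and then to bound the prefix sum $s_m$ trivially.

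\textbf{Step 1 (choice of $m$).} Let $c$ and $m_0$ be as in Proposition~\ref{prop:Rm-lower-bloom}, and for $n$ large set
\[
m:=\bigl\lceil (n/c)^{1/\delta}\bigr\rceil .
\]
For $n\ge N_0$ with $N_0$ large we have $m\ge \max\{m_0,2\}$. The proposition then gives
\[
|\mathcal R_m|\ge c\,m^{\delta}\ge n\ge n-2 .
\]

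\textbf{Step 2 (apply Lemma~\ref{lem:Rm-to-an}).} Since $|\mathcal R_m|\ge n-2$, the lemma applied with $n-2$ in place of $n$ yields $a_n\le s_m$. Because $(a_i)$ is increasing, each term in $s_m$ is at most $a_m$, so
\[
a_n\le s_m=\sum_{i=1}^m a_i\le m\,a_m .
\]

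\textbf{Step 3 (constants).} Put $C_1:=\max\{2,\,2c^{-1/\delta}\}$. For $n\ge N_0$ we have $(n/c)^{1/\delta}\ge 1$, hence
\[
m\le (n/c)^{1/\delta}+1\le 2c^{-1/\delta}n^{1/\delta}\le C_1 n^{1/\delta} .
\]
This first factor already gives $m\le C_1 n^{1/\delta}$. Since $m$ is an integer, also $m\le \lceil C_1 n^{1/\delta}\rceil$, and monotonicity of $(a_i)$ gives
\[
a_m\le a_{\lceil C_1 n^{1/\delta}\rceil}.
\]
Combining the two bounds with Step~2 gives \eqref{eq:recursion}.

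The only mildly delicate point is the bookkeeping of constants: $N_0$ must be large enough that $m\ge m_0$ and $(n/c)^{1/\delta}\ge1$, and both depend only on $\eta$ through $c$ and $m_0$. Otherwise the argument is routine. No genuine obstacle arises, since all the difficulty sits in Bloom's theorem, which has already been absorbed into Proposition~\ref{prop:Rm-lower-bloom}.
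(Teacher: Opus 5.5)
Your proposal is correct and follows the paper's proof essentially step for step: the same choice $m=\lceil (n/c)^{1/\delta}\rceil$, the same bound $a_n\le s_m\le m\,a_m$ via Lemma~\ref{lem:Rm-to-an}, and the same constant $C_1=\max\{2,2c^{-1/\delta}\}$. The only difference is cosmetic. You target $a_n$ directly using $|\mathcal R_m|\ge n\ge n-2$, whereas the paper first proves the bound for $a_{N+2}$ and then substitutes $N=n-2$ with an extra monotonicity step; your version slightly streamlines that final re-indexing.
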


\begin{proof}
Let $c,m_0$ be as in Proposition~\ref{prop:Rm-lower-bloom}. For $N$ large, set
\[
m_1:=\left\lceil (N/c)^{1/\delta}\right\rceil.
\]
Then $m_1\ge m_0$ and $c m_1^\delta \ge N$, so $|\mathcal R_{m_1}|\ge N$. By Lemma~\ref{lem:Rm-to-an},
\[
a_{N+2}\le s_{m_1}.
\]
Since $a_i\le a_{m_1}$ for $1\le i\le m_1$, we have $s_{m_1}\le m_1 a_{m_1}$, hence
\[
a_{N+2}\le m_1 a_{m_1}.
\]
Moreover, for $N$ large enough,
\[
m_1=\left\lceil (N/c)^{1/\delta}\right\rceil \le 2c^{-1/\delta}N^{1/\delta}.
\]
Set
\[
C_1:=\max\{2,\,2c^{-1/\delta}\}.
\]
Using the monotonicity of $(a_n)$, we obtain
\[
a_{N+2}\le C_1 N^{1/\delta} a_{\lceil C_1 N^{1/\delta}\rceil}
\]
for all sufficiently large $N$.

Now replace $N$ by $n-2$. For all sufficiently large $n$,
\[
a_n \le C_1 (n-2)^{1/\delta} a_{\lceil C_1 (n-2)^{1/\delta}\rceil}
\le C_1 n^{1/\delta} a_{\lceil C_1 n^{1/\delta}\rceil}.
\]
This is \eqref{eq:recursion} after renaming the lower threshold as $N_0$.
\end{proof}

\subsection{Iteration and the final exponent}
The basic recursion from Lemma~\ref{lem:basic-recursion} can now be iterated. The next lemma shows that any nondecreasing sequence satisfying such a recursion must obey a polynomial upper bound, up to a logarithmic factor.
\begin{lemma}\label{lem:iterate-recursion}
Let $(a_n)$ be nondecreasing and suppose there exist constants $C_1\ge 2$, $\alpha\in(0,1)$ and
$N_0$ such that
\[
a_n \le C_1\, n^\alpha\, a_{\lceil C_1 n^\alpha\rceil}
\qquad(n\ge N_0).
\]
Then there exist constants $B,C_2>0$ such that for all $n\ge 3$,
\[
a_n \le C_2\, n^{\alpha/(1-\alpha)}\,(\log n)^{B}.
\]
\end{lemma}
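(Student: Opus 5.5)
We iterate the recursion along the sequence of indices
\[
n_0:=n,\qquad n_{j+1}:=\lceil C_1 n_j^{\alpha}\rceil ,
\]
stopping at the first index $J$ with $n_J<N^*$. Here $N^*\ge N_0$ is a fixed threshold, chosen large enough that $C_1 x^{\alpha}+1\le x^{(1+\alpha)/2}$ for all $x\ge N^*$. Two consequences follow at once. First, the sequence strictly decreases while $n_j\ge N^*$. Second, since $n_{j+1}\le n_j^{(1+\alpha)/2}$, we have $\log n_j\le ((1+\alpha)/2)^j\log n$. Hence $J=O(\log\log n)$. Applying the hypothesis $J$ times and ending with the trivial bound $a_{n_J}\le \max_{k<N^*}a_k=:A^*$ (which uses monotonicity, since $n_J<N^*$) gives
\[
a_n\le A^*\prod_{j=0}^{J-1} C_1 n_j^{\alpha}= A^*\,C_1^{J}\prod_{j=0}^{J-1}n_j^{\alpha}.
\]
Because $J\ll\log\log n$, the factor $C_1^J$ is at most $(\log n)^{O(1)}$. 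This is where the power of $\log n$ enters.

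The main step is to bound $\prod_j n_j^{\alpha}$ by $n^{\alpha/(1-\alpha)}$ times a polylogarithmic factor. The idealized recursion $n_{j+1}=n_j^{\alpha}$ gives $n_j=n^{\alpha^j}$, so the exponent sum is $\alpha\sum_j\alpha^j=\alpha/(1-\alpha)$. To handle the constants, we track $L_j:=\log n_j$. From $n_{j+1}\le C_1n_j^\alpha+1\le 2C_1 n_j^{\alpha}$ we get $L_{j+1}\le \alpha L_j+c$ with $c:=\log(2C_1)$. By induction,
\[
L_j\le \alpha^jL_0+\frac{c}{1-\alpha}.
\]
Summing over $j<J$,
\[
\sum_{j<J}\alpha L_j\le \frac{\alpha}{1-\alpha}\log n+\frac{\alpha c}{1-\alpha}J .
\]
The last term is $O(\log\log n)$. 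Exponentiating therefore gives $\prod_j n_j^{\alpha}\le n^{\alpha/(1-\alpha)}(\log n)^{O(1)}$. Combining this with the bound for $C_1^J$ yields $a_n\le C_2 n^{\alpha/(1-\alpha)}(\log n)^B$ for all large $n$. For small $n\ge3$ the bound holds after enlarging $C_2$.

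The main point to verify carefully is the bound $J=O(\log\log n)$. For this, the threshold $N^*$ must be chosen so that each iteration contracts $\log n_j$ geometrically, by the factor $(1+\alpha)/2<1$, as long as $n_j\ge N^*$. It must also guarantee that $n_j$ stays in the range where the hypothesis applies. Both follow from $\alpha<1$ and the choice of $N^*$ above. If $J$ were as large as $\log n$, the constants $C_1^J$ would cost a power of $n$ instead of a power of $\log n$. This is why geometric decay of $L_j$ is the crucial ingredient.
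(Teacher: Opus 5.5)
Your proposal is correct and follows essentially the same route as the paper: iterate along $n_{j+1}=\lceil C_1 n_j^{\alpha}\rceil$, use $\log n_{j+1}\le \alpha\log n_j+\log(2C_1)$ to bound $\prod_j n_j^{\alpha}$ by $n^{\alpha/(1-\alpha)}(2C_1)^{\alpha J/(1-\alpha)}$, and show that the number of steps is $O(\log\log n)$. The only cosmetic difference is that you bound the step count via the contraction $n_{j+1}\le n_j^{(1+\alpha)/2}$ above your threshold $N^*$. The paper instead chooses $N_0$ with $\log(2C_1)/(1-\alpha)\le\tfrac12\log N_0$ and reuses the affine bound on $\log n_j$.
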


\begin{proof}
Enlarge $N_0$ if necessary so that
\[
\lceil C_1 x^\alpha\rceil \le 2C_1 x^\alpha \le x-1
\qquad(x\ge N_0),
\]
and also
\[
\frac{\log(2C_1)}{1-\alpha}\le \frac12\log N_0.
\]

Fix $n> N_0$, and define
\[
\widetilde{n}_0:=n,\qquad \widetilde{n}_{j+1}:=\lceil C_1 \widetilde{n}_j^\alpha\rceil \quad (j\ge 0)
\]
for as long as $\widetilde{n}_j>N_0$. Since $\widetilde{n}_{j+1}\le \widetilde{n}_j-1$ whenever $\widetilde{n}_j\ge N_0$, the positive integer sequence $(\widetilde{n}_j)$
strictly decreases until it first reaches the interval $[1,N_0]$. Hence there exists
a minimal $t=t(n)$ such that $\widetilde{n}_t\le N_0$.

Repeatedly applying the assumed recursion gives
\[
a_n\ \le\ \Bigl(\prod_{j=0}^{t-1} C_1 \widetilde{n}_j^\alpha\Bigr)\, a_{\widetilde{n}_t}.
\]
Since $\widetilde{n}_t\le N_0$, we have
\[
a_{\widetilde{n}_t}\le C_0:=\max_{1\le r\le N_0} a_r.
\]
Also, from \(\widetilde{n}_{j+1}\le 2C_1 \widetilde{n}_j^\alpha\) we obtain by induction
\[
\widetilde{n}_j\ \le\ (2C_1)^{1+\alpha+\cdots+\alpha^{j-1}}\, n^{\alpha^j}
\ \le\ (2C_1)^{1/(1-\alpha)}\, n^{\alpha^j}.
\]
Therefore
\[
\prod_{j=0}^{t-1} \widetilde{n}_j^\alpha
\le (2C_1)^{\alpha t/(1-\alpha)}\,
n^{\alpha+\alpha^2+\cdots+\alpha^t}
\le (2C_1)^{\alpha t/(1-\alpha)}\, n^{\alpha/(1-\alpha)}.
\]
Combining the previous estimates yields
\[
a_n\ \le\ C_0\, C_1^{\,t}\,(2C_1)^{\alpha t/(1-\alpha)}\, n^{\alpha/(1-\alpha)}.
\]

It remains to bound $t$. Write \(x_j:=\log \widetilde{n}_j\). Since
\[
\widetilde{n}_{j+1}\le 2C_1 \widetilde{n}_j^\alpha,
\]
we have
\[
x_{j+1}\le \alpha x_j+\log(2C_1).
\]
Iterating this inequality gives
\[
x_j\le \alpha^j\log n+\frac{\log(2C_1)}{1-\alpha}.
\]
By the choice of $N_0$,
\[
\frac{\log(2C_1)}{1-\alpha}\le \frac12\log N_0.
\]
Hence, whenever
\[
\alpha^j\log n\le \frac12\log N_0,
\]
we obtain \(x_j\le \log N_0\), and therefore $\widetilde{n}_j\le N_0$. Thus $t$ is at most the least integer $j$ such that
\[
\alpha^j\log n\le \frac12\log N_0.
\]
Equivalently, since $0<\alpha<1$,
\[
j\ge \frac{\log\log n-\log\bigl(\frac12\log N_0\bigr)}{\log(1/\alpha)}.
\]
It follows that
\[
t\ll \log\log n.
\]
Consequently,
\[
C_1^t(2C_1)^{\alpha t/(1-\alpha)}=(\log n)^{O(1)},
\]
and the claimed bound follows for all $n> N_0$. Enlarging $C_2$ covers the finitely many
values $3\le n\le N_0$.
\end{proof}

We are now ready to present the following.

\begin{proof}[Proof of Theorem~\ref{thm:upper-final}]
Fix $\varepsilon>0$. Choose $0<\eta<2506/4175$ so small that
\begin{equation}\label{eq:choose-eta}
\frac{1}{(6681/4175-\eta)-1}\ \le\ \frac{4175}{2506}+\frac{\varepsilon}{2}.
\end{equation}
Set $\delta=6681/4175-\eta$ and $\alpha=1/\delta\in(0,1)$. By Lemma~\ref{lem:basic-recursion}, there exist constants $C_1\ge 2$ and $N_0$ such that
\[
a_n \le C_1\, n^{\alpha}\, a_{\lceil C_1 n^{\alpha}\rceil}
\qquad(n\ge N_0).
\]
Applying Lemma~\ref{lem:iterate-recursion} yields constants $B,C_2>0$ such that
\[
a_n \le C_2\, n^{\alpha/(1-\alpha)}\,(\log n)^{B}
\qquad(n\ge 3).
\]
Since $\alpha/(1-\alpha)=1/(\delta-1)$, the choice \eqref{eq:choose-eta} gives
\[
n^{\alpha/(1-\alpha)}\ \le\ n^{4175/2506+\varepsilon/2}.
\]
Finally, $(\log n)^B\le n^{\varepsilon/2}$ for all sufficiently large $n$, so
\[
a_n\ \le\ C_\varepsilon\, n^{4175/2506+\varepsilon}
\]
for all large $n$, and we adjust $C_\varepsilon$ to cover the finitely many smaller $n$.
\end{proof}

\section{Concluding remarks}\label{sec:conclude}

\subsection{Generalized seeds of size $2$ or larger}

Although the body of this paper has focused exclusively on the classical seed $(1,2)$, the same circle of ideas applies much more generally. If one starts from any fixed positive seed
\[
\mathbf{u}=(u_1,\dots,u_s)
\qquad(s\ge2)
\]
and then follows the same greedy rule, the monotonicity argument of Section~\ref{sec:qualitative}, together with the eventual-linearity contradiction based on powers of $2$, still yields the analogue of Theorem~\ref{thm:qualitative}. Likewise, the convex-difference-set argument of Section~\ref{sec:upper-bound} extends, after routine changes in notation, to give a polynomial upper bound of the same shape as in Theorem~\ref{thm:upper-final}. We have chosen not to include the full details here in order to keep the paper focused on Hofstadter's original sequence \eqref{eq:hof}.

\subsection{Later upper-bound improvements}

After this paper was completed, Sothanaphan pointed out\footnote{See the comments section of the \emph{Erd\H{o}s Problems} website~\cite{EP}.} that Cushman's recent bound
\[
|A-A|\gg_{\varepsilon}|A|^{8/5+1/3440-\varepsilon}
\]
for finite convex sets can be combined with the argument of Section~\ref{sec:upper-bound} to yield the improved estimate
\[
a_n\ll_{\varepsilon} n^{688/413+\varepsilon}.
\]
See~\cite{Sothanaphan423,Cushman2025}. Since this refinement does not alter the underlying method, we have chosen to keep the original exposition.

\subsection{Open questions}

A substantial gap remains between Theorems~\ref{thm:quantitative-lower}
and~\ref{thm:upper-final}. It seems likely that the true growth of \(a_n\) is
much closer to linear than our present upper bound indicates. Writing
\[
b_n:=a_n-n,
\]
a natural next step is to determine the true order of magnitude of \(b_n\).
Most notably, one may ask whether \(b_n\) is sublinear.

\begin{problem}
Is it true that
\[
a_n=n+o(n)?
\]
Equivalently, does \(b_n=o(n)\) hold?
\end{problem}

To complement this problem, we computed $b_n$ for $1\le n\le30000$ and plotted the normalized quantities $b_n/n^{1/k}$ for $k=5,4,3,2$; see Figure~\ref{fig:bn-ratios}. The ratio $b_n/n^{1/2}$ appears to decrease with $n$, whereas $b_n/n^{1/5}$ appears to increase, which is consistent with a power-law heuristic of the form
\[
b_n=\Theta\bigl(n^{\alpha+o(1)}\bigr)
\qquad(n\to\infty)
\]
for some exponent $\alpha$ satisfying $1/5<\alpha<1/2$. Among these plots, the curve $b_n/n^{1/3}$ appears comparatively flat on the computed range, suggesting that $\alpha$ might be close to $1/3$, although the available data are far from decisive.

\begin{figure}[htbp]
  \centering
  \includegraphics[width=0.92\linewidth]{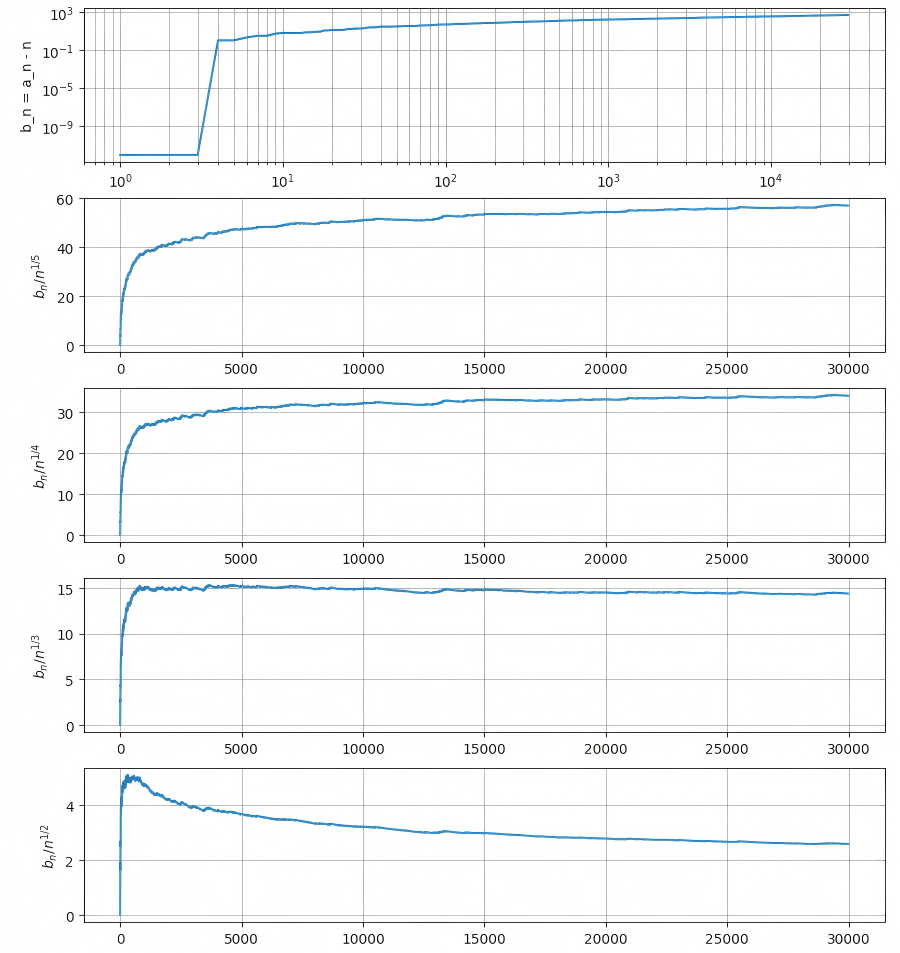}
  \caption{Numerical data for the sequence~\eqref{eq:hof}: the top panel shows $b_n=a_n-n$ (on a logarithmic $n$-axis), and the remaining panels plot the ratios $b_n/n^{1/k}$ for $k=5,4,3,2$ over $1\le n\le30000$.}
  \label{fig:bn-ratios}
\end{figure}

\bigskip

\noindent\textbf{Acknowledgments.}
An earlier version of this paper, containing only the qualitative conclusion $a_n=n+\omega(1)$, appeared as a short note in the comments section of the \emph{Erd\H{o}s Problems} website~\cite{EP}. Matthew Bolan also independently obtained the qualitative result that the classical sequence omits infinitely many positive integers; see again the comments section of the \emph{Erd\H{o}s Problems} website~\cite{EP}. We are grateful to Ingo Alth\"ofer for encouraging us to think about extensions to more general initial seeds. We thank Yann Bugeaud for pointing out that the method in our proof of $a_n=n+\omega(1)$ can in fact be strengthened to yield the bound of Theorem~\ref{thm:quantitative-lower}. We also thank Norbert Hegyv\'ari for sharing the papers~\cite{Beker2024,Hegyvari1986,Tao2026} on consecutive sums. Finally, we thank Boris Alexeev and Wolfram Bernhardt for computing and sharing the numerical data posted in the comments section of the \emph{Erd\H{o}s Problems} website~\cite{EP}.

\medskip

{\noindent\bf Declaration of AI usage.}
An AI assistant was used only during the brainstorming phase related to Theorem~\ref{thm:upper-final}, and to draft the Python code used to generate Figure~\ref{fig:bn-ratios}. Apart from this limited assistance, all mathematical arguments, proofs, and the writing of this paper were carried out by the author.

\end{document}